\newcolumntype{C}[1]{>{\centering\let\newline\\\arraybackslash\hspace{0pt}}m{#1}}
\def\R{\mathbb{R}}
\def\N{\mathbb{N}}
\def\cL{\mathcal{L}}
\def\cN{\mathcal{N}}
\def\cS{\mathcal{S}}
\def\cT{\mathcal{T}}
\def\a{\alpha}
\def\b{\beta}
\def\g{\gamma}
\def\d{\delta}
\def\l{\lambda}
\def\p{\partial}
\def\veps{\varepsilon}
\def\vrho{\varrho}
\def\O{\Omega}
\def\hu{\widehat{u}}
\newcommand{\dv}[1]{\,{\mathrm d}#1}
\newcommand{\wcheck}[1]{#1\hspace{-.8ex}\mbox{\huge {\lower.45ex \hbox{$\textstyle \check{}$}}} \hspace{.5ex}}
\DeclareMathOperator{\diver}{div}
\let\oldmarginpar\marginpar
\renewcommand\marginpar[1]{
  \oldmarginpar[\raggedleft\footnotesize #1]
  {\raggedright\footnotesize #1}}
\newtheorem{definition}{Definition}
\newtheorem{lemma}[definition]{Lemma}
\newtheorem{proposition}[definition]{Proposition}
\newtheorem{theorem}[definition]{Theorem}
\newtheorem{corollary}[definition]{Corollary}
\newtheorem{remark}[definition]{Remark}
\newtheorem{remarks}[definition]{Remarks}
\newtheorem{example}[definition]{Example}
\newtheorem{algorithm}[definition]{Algorithm}
\numberwithin{definition}{section}
\definecolor{tourquoise}{RGB}{0,170,180}	% #00AAB4
\definecolor{darkred}{RGB}{238,34,34}		% #EE2222
\definecolor{darkgreen}{RGB}{0,190,0}		% #00BE00
\definecolor{lightgray}{RGB}{210,210,210}	% #D2D2D2
\definecolor{deepblue}{RGB}{0,0,240}		% #0000FF
\definecolor{darkgray}{RGB}{144,144,144}	% #999999
\definecolor{kingblue}{RGB}{64,96,224}		% #4169E1 
\definecolor{gold}{RGB}{240,208,0}		% #FFD700
\definecolor{verydarkred}{RGB}{176,0,0}		% #B00000
\newtheorem{assumption}[definition]{Assumption}
\def\Rinf{\R\cup \{+\infty\}}
\def\tl{\widetilde{\l}}
\newcommand{\til}[1]{\widetilde{#1}}
\newcommand{\h}[1]{\widehat{#1}}
\begin{document}
\title[ADMM with variable step sizes]{Alternating direction method of multipliers with variable step sizes}
\author{S\"{o}ren Bartels}
\address{Department of Applied Mathematics, Mathematical Institute, University of Freiburg, Hermann-Herder-Str. 9, 79104 Freiburg i. Br., Germany}
\email{bartels@mathematik.uni-freiburg.de}
\author{Marijo Milicevic}
\address{Department of Applied Mathematics, Mathematical Institute, University of Freiburg, Hermann-Herder-Str. 9, 79104 Freiburg i. Br., Germany}
\email{marijo.milicevic@mathematik.uni-freiburg.de}
\date{\today}
\subjclass{65K15, 65N12}
\keywords{convex minimization, alternating direction method of multipliers, variable step sizes, convergence, finite elements}

\maketitle

\begin{abstract}
The alternating direction method of multipliers (ADMM) is a flexible method to solve a large class
of convex minimization problems. Particular features are its unconditional convergence with respect
to the involved step size and its direct applicability. This article deals with the ADMM
with variable step sizes and devises an adjustment rule for the step size relying on the monotonicity
of the residual and discusses proper stopping criteria. The numerical experiments show significant
improvements over established variants of the ADMM.
\end{abstract}

\section{Introduction}

The development of iterative schemes for convex minimization problems is a fundamental and
challenging task in applied mathematics with a long history reflected in a number of articles, e.g., in
~\cite{bt:09, bv:04, chambolle:04, cp:11, fg:83, glowinski:84, gl:89, go:09, hik:03, lm:79, nesterov:83, nesterov:05, rockafellar:76}.
These include gradient descent methods, semi-smooth Newton methods, (accelerated) primal-dual-methods,
dual methods, Br\`{e}gman iteration and operator splitting methods.
Here, we aim at devoloping a strategy for an automated adjustment of the step size of the
alternating direction method of multipliers (ADMM), which is an operator splitting method,
motivated by the fact that its performance is known to critically depend on the involved step size.

We consider convex variational problems of the form
\[
\inf_{u \in X} F(Bu) + G(u)
\]
that arise in various applications from partial differential equations, mechanics, imaging and economics, e.g.,
the $p$-Laplace equation, the ROF model for image denoising, obstacle problems and convex programming. We assume that
possible natural constraints are encoded in the objective functionals via indicator functionals. One way to solve this
minimization problem is to introduce an auxiliary variable $p=Bu$ which leads to the constrained minimization problem
\[
\inf_{(u,p) \in X \times Y} F(p) + G(u) \quad \text{subject to} \quad p = Bu.
\]
With a Lagrange multiplier ~$\l$ and a positive step size $\tau >0$
the corresponding augmented Lagrangian reads
\[
\cL_\tau(u,p;\l) = F(p) + G(u) + (\l,Bu-p)_Y + \frac{\tau}{2} \|Bu-p\|_Y^2,
\]
which has been introduced by Hestenes in ~\cite{hestenes:69} and Powell in ~\cite{powell:69}. The augmented
Lagrangian method (ALM) minimizes ~$\cL_\tau$ with respect to ~$(u,p)$ jointly and then updates the Lagrange
multiplier ~$\l$. Since the joint minimization with respect to ~$u$ and ~$p$ is almost as hard as the original
problem, the idea of the alternating direction method of multipliers (ADMM) is to decouple the minimization
and to minimize ~$\cL_\tau$ with respect to ~$u$ and ~$p$ successively and then update the
Lagrange multiplier. In this way one can benefit from the particular features of the objective functionals
~$F$ and ~$G$ in the sense that the separate minimization problems can often be solved directly.

The ADMM was first introduced by Glowinski and Marroco in ~\cite{gm:75} and
Gabay and Mercier in ~\cite{gm:76}. For a comprehensive discussion on ALM and ADMM and applications in partial
differential equations, consider, for instance, ~\cite{glowinski:84}, and for the connection of ADMM to other
splitting methods, particularly the Douglas-Rachford splitting method (DRSM) ~\cite{dr:56}, see, e.g., ~\cite{gabay:83,eckstein:89}.
In the recent literature versions of ADMM for convex minimization problems with more than two primal and/or auxiliary
linearly constrained variables have also been analyzed, see, e.g., ~\cite{dhyz:16,hyz:14}.

He and Yuan establish in ~\cite{hy:12} an ~$\mathcal{O}(1/J)$ convergence rate in an ergodic sense of a quantity
related to an optimality condition which is based on a variational inequality reformulation of the constrained minimization
problem. In ~\cite{hy:15}, they prove an ~$\mathcal{O}(1/J)$ convergence rate for the residual of the ADMM. Shen and Xu
prove in ~\cite{sx:14} an $\mathcal{O}(1/J)$ convergence rate in an ergodic sense for a modified ADMM proposed by
Ye and Yuan in ~\cite{yy:07} which augments the original ADMM with fixed step size by an additional extrapolation
of the primal and dual variable. In ~\cite{gosb:14},
motivated by the work of Nesterov ~\cite{nesterov:83}, Goldstein et al. consider an accelerated version of ADMM (Fast-ADMM)
and prove an ~$\mathcal{O}(1/J^2)$ convergence rate for the objective value of the dual problem of the constrained
minimization problem under the assumption that both objective funtionals are strongly convex and ~$G$ is quadratic.
Furthermore, they prove an ~$\mathcal{O}(1/J^2)$ convergence rate for the residuals if in addition ~$B$ has full row rank
and also propose a Fast-ADMM with restart for the case of ~$F$ and ~$G$ being only convex for which, however,
a convergence rate ~$\mathcal{O}(1/J^2)$ could not been proven.
Recently, Deng and Yin established in ~\cite{dy:16} the linear convergence of a generalized ADMM
under a variety of different assumptions on the objective functionals and the operator ~$B$.
%As pointed out in
%~\cite{dy:16}, the linear convergence of ADMM can be obtained by the linear convergence of DRSM ~\cite{dr:56,lm:79} and the
%proximal point algorithm presented in ~\cite{rockafellar:76} due to their mutual connections. Particularly, Deng and Yin
Particularly, they derive an explicit upper bound for the linear convergence rate of the ADMM and, by optimizing the rate with respect
to the step size, obtain an optimized step size. However, experiments reveal that the optimized step size often leads
to a pessimistic convergence rate. Furthermore, the computation of the optimized step size requires the knowledge of
the strong convexity constant of ~$G$ and the Lipschitz constant of ~$\nabla G$ as well as the computation of the
minimal eigenvalue of the operator ~$B'B$. In many of the cited results, the convergence rate of the ADMM critically depends on the
dimension of ~$X$ via the operator norm of ~$B$ and an inf-sup-condition associated to ~$B$.

Since the convergence rate of the ADMM depends on the step size ~$\tau$ it seems reasonable to
consider a variation of the step size. He and Yang have already studied
the ADMM with variable step sizes in ~\cite{hy:98} under the assumption that ~$F$ and ~$G$ are continuously differentiable.
They prove termination of the algorithm for monotonically decreasing or monotonically increasing step sizes and prove
convergence for fixed step sizes. In ~\cite{km:98}, Kontogiorgis and Meyer also consider the ADMM with variable step sizes
and prove convergence provided that the step sizes decrease except for finitely many times. However, the suggested
strategy in varying the step sizes is tailored to the problems discussed in their work. In ~\cite{hyw:00,hlw:03} He et al.
consider the ADMM with self-adaptive step sizes and prove convergence provided that the step size is uniformly bounded
from above and below and that the sequence of difference quotients
of the step size is summable. Their proposed adjustment rule for the step size aims at balancing the two components
of the residual of the ADMM. In ~\cite{hlhy:02}, He et al. consider an inexact ADMM where certain proximal terms are
added to the augmented Lagrangian functional ~$\cL_\tau$ and the
subproblems are only solved approximately. They also allow for variable step sizes and proximal parameters and prove
convergence under a summability condition of the difference quotients of the step sizes and proximal parameters
and suggest to adjust the step size in such a way that the two contributions of the residual of the ADMM balance out.

In this paper we aim at devising a general automatic step size adjustment strategy. The adjustment of the step size is based on the
observation that the residual of the ADMM decreases as long as the sequence of step sizes is non-increasing. Particularly, we prove
that the residual reduces by a factor of at least $\gamma<1$ as long as the step size decreases and under the assumption
that ~$\nabla G$ is Lipschitz continuous and ~$B'$ is injective with bounded left-inverse. More precisely, we propose
the following strategy:

\begin{enumerate}
\item Choose a feasible initialization $(u^0,\l^0)$, a large step size ~$\tau_0$ and a contraction factor $\g \in (0,1)$.
\item Minimize ~$\cL_\tau$ with respect to ~$p$, then minimize ~$\cL_\tau$ with respect to ~$u$ and finally update ~$\l$.
\item Check if the residual is decreased by the factor ~$\g$. If this is not the case decrease the step size.
\item If the current step size is smaller than a chosen lower bound, restart the algorithm with a larger ~$\g \in (0,1)$.
Otherwise continue with ~(2).
\end{enumerate}

We furthermore address the choice of accurate stopping criteria and propose a stopping criterion that controls
the distance between the primal iterates and the exact solution if strong coercivity is given.

The paper is organized as follows. In Section ~\ref{sec:preliminaries} we state some basic notation that we use and briefly
introduce the finite element spaces we use when applying the general framework to convex model problems. Section ~\ref{sec:admm} is devoted to the analysis of ADMM with variable step sizes. In Subsection ~\ref{subsec:min problem} we present the minimization problem
and the corresponding saddle-point formulation on which the ADMM is based. The ADMM with variable step sizes is then recalled in Subsection ~\ref{subsec:convergence of admm} and a convergence proof, which is similar to that of Kontogiorgis and Meyer in ~\cite{km:98}, is given. Additionally, we show that the residual controls the distance between iterates and a saddle-point if strong coercivity is given. The monotonicity of the residual of the ADMM with variable step sizes, which is proven in Subsection ~\ref{subsec:monotonicity and convergence rate}, is a crucial property of the method to ensure the termination of the scheme even if the step size is being decreased. Furthermore, it directly implies a sublinear convergence rate of the method without any additional conditions on the functionals. In Subsection ~\ref{subsec:linear convergence} we deal with the linear convergence of the ADMM with variable step sizes which motivates to adjust the step size according to the contraction properties of the residual. Subsequently, we make our approach more precise and present the Variable-ADMM in Subsection ~\ref{subsec:adaptivity}. In Section ~\ref{sec:numerical experiments} we apply our algorithm to the obstacle problem and the $TV$-$L^2$ minimization (or ROF) problem and compare its performance to the classical ADMM with fixed step size and the Fast-ADMM proposed by Goldstein et al. in ~\cite{gosb:14}, which we specify in the appendix. Here, we also discuss stopping criteria used in the literature that may fail to lead to an accurate approximation. Finally, we give a conclusion in Section ~\ref{sec:conclusion}.

\section{Preliminaries}\label{sec:preliminaries}

\subsection{Notation}

We consider two Hilbert spaces $X,Y$ equipped with inner products $(\cdot,\cdot)_X$ and
$(\cdot,\cdot)_Y$, respectively, and identify their duals, denoted by ~$X'$ and ~$Y'$, with ~$X$ and ~$Y$, respectively.
For a linear operator $B:X \to Y$ we denote by $B':Y'\to X'$ its adjoint.
We furthermore let $\O\subset\R^d$, ~$d=2,3$, be a bounded polygonal
Lipschitz domain. The $L^2$-norm on ~$\O$ is denoted by ~$\|\cdot\|$ and is induced by the scalar product
\[
(v,w) := \int_\O v \cdot w \dv{x}
\]
for scalar functions or vector fields $v,w \in L^2(\O;\R^r)$, $r \in \{1,d\}$, and we write ~$|\cdot|$
for the Euclidean norm. We use for arbitrary sequences $(a^j)_{j \in \N}$ and step sizes $\tau_j>0$
the backward difference quotient
\[
d_t a^{j+1} := \frac{a^{j+1}-a^j}{\tau_{j+1}}.
\]
Using this definition we will also work with
\[
d_t^2 a^{j+1} = \frac{d_t a^{j+1} - d_t a^j}{\tau_{j+1}} = \frac{\frac{a^{j+1}-a^j}{\tau_{j+1}}-\frac{a^j-a^{j-1}}{\tau_j}}{\tau_{j+1}}.
\]
Note that we have the discrete product rules
\begin{subequations}
\begin{align}
2 d_t a^{j+1} \cdot a^{j+1} &= d_t |a^{j+1}|^2 + \tau_{j+1} |d_t a^{j+1}|^2, \label{eq:product rule 2}\\
d_t (\tau_{j+1}^2 |a^{j+1}|^2) &= (d_t \tau_{j+1}^2) |a^j|^2 + \tau_{j+1}^2 d_t |a^{j+1}|^2. \label{eq:product rule 3}
\end{align}
\end{subequations}
Finally, throughout the paper ~$c$ will denote a generic, positive and mesh-independent constant.

\subsection{Finite element spaces}

We let ~$(\cT_h)_{h>0}$ be a family of
regular triangulations of ~$\O$ with mesh sizes $h=\max_{T \in \cT_h} h_T$ with ~$h_T$ being the diameter
of the simplex ~$T$. We further denote $h_{\min}=\min_{T \in \cT_h} h_T$. For a given triangulation ~$\cT_h$ the set
~$\cN_h$ contains the corresponding nodes and we consider the finite element spaces of continuous,
piecewise affine functions 
\[
\cS^1(\cT_h) := \bigl\{v_h\in C(\overline{\O}): \; v_h|_T \text{ affine for all }T\in \cT_h\bigr\}
\]
and of elementwise constant functions ~($r=1$) or vector fields ~($r=d$)
\[
\cL^0(\cT_h)^r := \big\{q_h\in L^\infty(\O;\R^r): \; q_h|_T \text{ constant for all } T\in \cT_h\big\}.
\]

Correspondigly, we denote by ~$\cT_\ell$, ~$\ell \in \N$, a triangulation of ~$\O$ generated from an initial triangulation
~$\cT_0$ by ~$\ell$ uniform refinements. The refinement level ~$\ell$ will be related to the mesh size ~$h$ by
~$h \sim 2^{-\ell}$. The set of nodes ~$\cN_\ell$ is then defined as before.
In our experiments we will use the discrete norm ~$\|\cdot\|_h$ induced by the 
discrete scalar product
\[
(v,w)_h := \sum_{z \in \cN_h} \b_z v(z) w(z)
\]
for $v,w \in \cS^1(\cT_h)$, where ~$\b_z=\int_\O \varphi_z \dv{x}$ and $\varphi_z \in \cS^1(\cT_h)$
is the nodal basis function associated with the node $z \in \cN_h$. This mass lumping will allow for the
nodewise solution of certain nonlinearities. We have the relation
\[
\|v_h\| \leq \|v_h\|_h \leq (d+2)^{1/2} \|v_h\|
\]
for all $v_h \in \cS^1(\cT_h)$, cf. \cite[Lemma 3.9]{sbartels:15}.
On $\cL^0(\cT_h)^r$ we will also consider the weighted $L^2$-inner product
\[
(\cdot,\cdot)_w := h^d (\cdot,\cdot)
\]
which has the property $\|q_h\|_w \leq c \|q_h\|_{L^1(\Omega)}$ due to an inverse estimate,
cf. ~\cite{bs:08}.

\section{Alternating direction method of multipliers}\label{sec:admm}

\subsection{Minimization problem and saddle-point formulation}\label{subsec:min problem}

We are given convex, proper, and lower-semicontinuous functionals $F:Y\to \Rinf$, 
$G:X\to \Rinf$, and a bounded linear operator $B:X\to Y$ such that the functional $I(\cdot)=F(B\cdot) + G(\cdot)$
is proper and coercive. We consider the minimization problem
\[
\inf_{u\in X} I(u) = \inf_{u\in X} F(Bu) + G(u).
\]
Upon introducing $p = Bu$ and choosing $\tau>0$
we obtain the equivalent, consistently 
stabilized saddle-point problem defined by
\[
\inf_{(u,p)\in X\times Y}\sup_{\l\in Y}\cL_\tau(u,p;\l)= F(p) + G(u) + (\l,Bu-p)_Y
+ \frac{\tau}{2} \|Bu-p\|_Y^2.
\]

\begin{remark}
If there exists a saddle-point $(u,p;\l)$ for ~$\cL_\tau$,
then ~$u$ is a minimizer of ~$I$ and $p=Bu$, cf. ~\cite[Chapter VI, Thm. 2.1]{glowinski:84}.
On the other hand, if ~$X$ and ~$Y$ are finite-dimensional and if ~$u \in X$ is a minimizer of
~$I$, the existence of a saddle-point $(u,p;\l)$ for ~$\cL_\tau$ can be proven 
by taking $p=Bu$, using ~\cite[Thm. 23.8]{rockafellar:70} and ~\cite[Thm. 23.9]{rockafellar:70},
and incorporating the fact that $(v,q) \mapsto \cL_\tau(v,q;\l)$ for fixed ~$\l$
is convex, proper, coercive and lower-semicontinuous and therefore admits a minimizer.
The characterizing optimality conditions for such a minimizer are satisfied by the pair $(u,p)$ if
~$\l$ is chosen properly and one deduces that $(u,p;\l)$ is a saddle-point for ~$\cL_\tau$ (see also
~\cite{glowinski:84,rockafellar:70}).
\end{remark}

In this paper we make the following assumption.

\begin{assumption}\label{ass:existence of sp}
There exists a saddle-point $(u,p;\l)$ for ~$\cL_\tau$.
\end{assumption}

Possible strong convexity of $F$ or $G$ is characterized by
nonnegative functionals $\varrho_F:Y\times Y \to \R$ and
$\varrho_G:X\times X \to \R$ in the following lemma.

\begin{lemma}[Optimality conditions]\label{lemma:opt cond}
A triple $(u,p,\l)$ is a saddle point for $\cL_\tau$ if and only if
$Bu=p$ and 
\[\begin{split}
\big(\l,q-p\big)_Y + F(p) + \varrho_F(q,p) 
& \le F(q),\\
-\big(\l,B(v-u)\big)_Y + G(u) + \varrho_G(v,u) 
& \le G(v),
\end{split}\]
for all $(v,q)\in X\times Y$. 
\end{lemma}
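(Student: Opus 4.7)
My plan is to prove the two implications separately, relying on the fact that $\cL_\tau$ is affine in $\l$ and that its joint dependence on $(u,p)$ decouples via the subgradients of $F$ and $G$. For the direction ``saddle point implies the two inequalities'', I would first exploit the supremum in the multiplier: since $\mu \mapsto \cL_\tau(u,p;\mu)$ is affine, the condition $\cL_\tau(u,p;\mu) \leq \cL_\tau(u,p;\l)$ for every $\mu \in Y$ forces $Bu = p$. This identifies the value at the saddle point with $F(p) + G(u)$ and annihilates the gradient of the quadratic penalty. The primal infimum in $(v,q)$ then implies in particular that $p$ minimizes $q \mapsto \cL_\tau(u,q;\l)$ and $u$ minimizes $v \mapsto \cL_\tau(v,p;\l)$ separately; thanks to $Bu = p$ the first-order conditions simplify to $\l \in \partial F(p)$ and $-B'\l \in \partial G(u)$, and the stated inequalities are the (possibly strong) convexity inequalities for $F$ at $p$ and $G$ at $u$ with $\varrho_F(q,p)$ and $\varrho_G(v,u)$ absorbing the convexity excess.

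For the converse, assume $Bu = p$ together with the two stated inequalities. The first saddle-point inequality is immediate because $Bu - p = 0$ makes both the linear and the quadratic $\mu$-dependent terms vanish, so $\cL_\tau(u,p;\mu) = F(p) + G(u) = \cL_\tau(u,p;\l)$ for every $\mu$. For the other inequality I would add the two hypothesized estimates and use $Bu = p$ to collect the two multiplier contributions into a single term that, after rearrangement onto the right-hand side, reproduces the linear term $(\l, Bv-q)_Y$ of $\cL_\tau(v,q;\l)$; the nonnegative residuals $\varrho_F(q,p)$ and $\varrho_G(v,u)$ together with the nonnegative penalty $\frac{\tau}{2}\|Bv-q\|_Y^2$ then act as slack on the two sides of the inequality.

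The only genuinely subtle point is the interpretation of $\varrho_F$ and $\varrho_G$: they should be read as the Bregman residuals $F(q) - F(p) - (\l, q-p)_Y$ and $G(v) - G(u) + (\l, B(v-u))_Y$ of the subgradient inequalities at the multipliers selected by the saddle-point conditions, which are automatically nonnegative by convexity and reduce to the standard strong-convexity excess when it is available. No strict positivity is exploited, so the argument accommodates the plain-convex and strongly convex cases uniformly. Beyond this bookkeeping the proof is routine: the augmentation parameter $\tau$ plays no active role, since the penalty term vanishes at the saddle point and is dropped as nonnegative slack in the converse.
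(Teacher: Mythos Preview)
Your proof is correct and follows the same line as the paper's, which is extremely terse and simply says that the variational inequalities characterize the stationarity conditions $0 \in \partial_p \cL_\tau(u,p;\l)$ and $0 \in \partial_u \cL_\tau(u,p;\l)$; you have supplied the details the paper omits, including the argument that the affine dependence on $\mu$ forces $Bu = p$ and the explicit verification of the converse.

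One clarification is worth making: in your ``subtle point'' paragraph you describe $\varrho_F$ and $\varrho_G$ as the Bregman residuals at the selected multiplier $\l$. Taken literally this would make the two inequalities tautological (they would hold with equality by definition), which is not what the paper intends. In the paper $\varrho_F$ and $\varrho_G$ are \emph{fixed} nonnegative functionals given in advance as lower bounds on the convexity excess (zero in the merely convex case, $\alpha\|\cdot-\cdot\|^2$ in the strongly convex case), independent of $\l$. Your actual argument uses them in exactly this way, so the proof is unaffected; only the description needs adjusting.
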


\begin{proof}
The variational inequalities characterize stationarity with 
respect to ~$u$ and ~$p$, respectively, i.e., that, e.g.,
$0\in \p_u \cL_\tau (u,p;\l)$. 
\end{proof}

For ease of presentation we introduce the symmetrized coercivity
functionals
\[
\h{\varrho}_G(u,u')= \varrho_G(u,u') + \varrho_G(u',u),
\quad \h{\varrho}_F(p,p')= \varrho_F(p,p') + \varrho_F(p',p).
\]

\subsection{Algorithm and convergence}\label{subsec:convergence of admm}

We approximate a saddle-point using the following iterative scheme which has been introduced
in ~\cite{gm:75,gm:76,glowinski:84} with fixed step sizes.

\begin{algorithm}[Generalized ADMM]\label{alg:alg_2}
Choose $(u^0,\l^0)\in X\times Y$ such that $G(u^0)<\infty$.
Choose $\overline{\tau} \geq \underline{\tau}>0$ and $\overline{R}\gg 0$ and set $j=1$. \\
(1) Set $\tau_1=\overline{\tau}$ and $R_0=\overline{R}$.\\
(2) Compute a minimizer $p^j\in Y$ of the mapping
\[
p\mapsto \cL_{\tau_j}(u^{j-1},p;\l^{j-1}). 
\]
(3) Compute a minimizer $u^j\in X$ of the mapping 
\[
u \mapsto \cL_{\tau_j} (u,p^j;\l^{j-1}).  
\]
(4) Update $\l^j = \l^{j-1} + \tau_j (Bu^j-p^j)$. \\
(5) Define 
\[
R_j = \bigl(\|\l^j-\l^{j-1}\|_Y^2 + \tau_j^2 \|B(u^j-u^{j-1})\|_Y^2 \bigr)^{1/2} .
\]
(6) Stop if ~$R_j$ is sufficiently small.\\
(7) Choose step size $\tau_{j+1}\in [\underline{\tau},\overline{\tau}]$.\\
(8) Set $j\to j+1$ and continue with~(2).
\end{algorithm}

\begin{remarks}
(1) A variant of the ADMM with variable step sizes has been proposed and analyzed
in ~\cite{km:98}. Therein, a more general scheme
with symmetric positive definite matrices ~$H_j$ is presented.
%with eigenvalues uniformly bounded from
%below away from zero such that the eigenvalues of ~$H_j-H_{j+1}$ are nonnegative except
%for finitely many steps. 
We will give a more compact proof of boundedness of the iterates and
termination of the algorithm related to ~\cite[Lem. 2.5, Lem. 2.6]{km:98} with $H_j=\tau_j I$.\\
(2) We call Algorithm ~\ref{alg:alg_2}
with fixed step sizes, i.e., $\overline{\tau}=\underline{\tau}$, simply ``ADMM".\\
(3) In Subsection ~\ref{subsec:adaptivity} we present a strategy for the adjustment of ~$\tau_j$
based on contraction properties and introduce the ``Variable-ADMM".
\end{remarks}

The iterates of Algorithm ~\ref{alg:alg_2} satisfy the following optimality
conditions.

\begin{lemma}[Decoupled optimality]\label{lemma:dec opt cond}
With $\tl^j := \l^{j-1} + \tau_j(Bu^{j-1}-p^j)$ the iterates $(u^j,p^j,\l^j)_{j=0,1,\dots}$ satisfy
for $j \geq 1$ the variational inequalities
\[\begin{split}
\big( \tl^j,q-p^j\big)_Y + F(p^j)
+ \varrho_F(q,p^j) &\le F(q), \\
-\big(\l^j,B(v-u^j)\big)_Y + G(u^j) 
+ \varrho_G(v,u^j) &\le G(v),
\end{split}\]
for all $(v,q)\in X\times Y$. In particular, $(u^j,p^j;\l^j)$ is a saddle-point of ~$\cL_\tau$ if and only if
$\l^j-\l^{j-1}=0$ and $B(u^j-u^{j-1})=0$.
\end{lemma}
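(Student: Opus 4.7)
The proof divides into two parts: deriving the two displayed variational inequalities, and establishing the ``if and only if'' characterization.

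For the first part, I would write out the minimization problems defining $p^j$ and $u^j$ in steps~(2) and~(3) of Algorithm~\ref{alg:alg_2}. In step~(2), $p^j$ minimizes the $\varrho_F$-convex mapping $p \mapsto F(p) - (\lambda^{j-1}, p)_Y + \frac{\tau_j}{2}\|Bu^{j-1} - p\|_Y^2$ modulo $p$-independent constants. Comparing the value at $p^j$ against that at an arbitrary $q\in Y$ via the $\varrho_F$-convexity of $F$ and expanding the smooth quadratic-plus-linear remainder, the linear contribution in $q - p^j$ collects exactly to $-(\tilde\lambda^j, q - p^j)_Y$ with $\tilde\lambda^j = \lambda^{j-1} + \tau_j(Bu^{j-1} - p^j)$. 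Rearranging yields the first claimed inequality. The second follows by the same template applied to step~(3), using $\varrho_G$-convexity of $G$ and noting that step~(4) identifies the linear term $B'(\lambda^{j-1} + \tau_j(Bu^j - p^j))$ with $B'\lambda^j$.

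For the ``in particular'' statement, the $(\Leftarrow)$ direction is direct. Assuming $\lambda^j - \lambda^{j-1} = 0$ and $B(u^j - u^{j-1}) = 0$, step~(4) yields $\tau_j(Bu^j - p^j) = 0$, hence $Bu^j = p^j$; combined with $Bu^{j-1} = Bu^j$, this gives $\tilde\lambda^j = \lambda^{j-1} + \tau_j(Bu^{j-1} - p^j) = \lambda^{j-1} = \lambda^j$. Replacing $\tilde\lambda^j$ by $\lambda^j$ in the first decoupled VI reproduces, together with $Bu^j = p^j$, exactly the conditions of Lemma~\ref{lemma:opt cond}; hence $(u^j, p^j; \lambda^j)$ is a saddle-point.

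For the $(\Rightarrow)$ direction, Lemma~\ref{lemma:opt cond} applied to the saddle-point gives $Bu^j = p^j$, and step~(4) then forces $\lambda^j = \lambda^{j-1}$. The remaining claim $B(u^j - u^{j-1}) = 0$ is the crux, and where I expect the main obstacle to lie. The plan is to compare the saddle-point's first VI (involving $\lambda^j$) with the decoupled first VI (involving $\tilde\lambda^j$): both identify a generalized $\varrho_F$-subgradient of $F$ at the common point $p^j$. Invoking the uniqueness of this characterization, which stems from the strict convexity in $p$ of the subproblem in step~(2) together with the $\varrho_F$-structure, one forces $\tilde\lambda^j = \lambda^j$. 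Since a direct computation from step~(4) gives $\lambda^j - \tilde\lambda^j = \tau_j B(u^j - u^{j-1})$, the conclusion follows. The other steps are routine rearrangements of first-order optimality conditions; it is only this matching of $\tilde\lambda^j$ with $\lambda^j$ that requires care.
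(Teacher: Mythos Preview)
Your derivation of the two variational inequalities is correct and is precisely what the paper does: it records that minimality in steps~(2) and~(3) of Algorithm~\ref{alg:alg_2} means $0\in\partial_p\cL_{\tau_j}(u^{j-1},p^j;\lambda^{j-1})$ and $0\in\partial_u\cL_{\tau_j}(u^j,p^j;\lambda^{j-1})$, and that these inclusions are exactly the stated inequalities once one inserts the definitions of $\tilde\lambda^j$ and $\lambda^j$. The paper's proof stops there and does not argue the ``in particular'' clause at all; your $(\Leftarrow)$ argument for that clause is correct and is the direction that matters for using $R_j$ as a stopping quantity.

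Your $(\Rightarrow)$ argument, however, has a genuine gap, and in fact that direction does not hold in general. You want to infer $\tilde\lambda^j=\lambda^j$ from the fact that both satisfy the first variational inequality at the common point $p^j$, i.e.\ both lie in $\partial F(p^j)$. But $\partial F(p^j)$ need not be a singleton; strict convexity of the $p$-subproblem guarantees uniqueness of the minimizer $p^j$, not of the multiplier, and the $\varrho_F$-structure does not change this. A concrete counterexample: take $X=Y=\R$, $B=\mathrm{id}$, $G(u)=\tfrac12 u^2$, $F(p)=|p|$, $\tau_1=1$, $u^0=1$, $\lambda^0=0$. One iteration gives $p^1=0$, $u^1=0$, $\lambda^1=0$, so $(u^1,p^1;\lambda^1)=(0,0;0)$ is the (unique) saddle-point and $\lambda^1-\lambda^0=0$, yet $B(u^1-u^0)=-1\neq 0$. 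Hence the biconditional as stated is false, and no uniqueness argument can repair it; only the implication you established in the $(\Leftarrow)$ direction is actually valid.
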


\begin{proof}
By step ~(1) in Algorithm ~\ref{alg:alg_2} we have $0 \in \partial_p\cL_{\tau_j}(u^{j-1},p^j;\l^{j-1})$
which is equivalent to the first variational inequality using the definition of ~$\tl^j$.
Step ~(2) implies $0 \in \partial_u\cL_{\tau_j}(u^j,p^j;\l^{j-1})$ which
is equivalent to the second variational inequality using the definition of ~$\l^j$ in step ~(3).
\end{proof}

We set $\tau_0:=\tau_1$ for ~$d_t \tau_1=0$ to be well-defined in the convergence proof.

%\begin{theorem}[Convergence]\label{thm:conv}
%Let $(u,p,\l)$ be a saddle point for $\cL_\tau$. Let $N \in \N$ and $\{j_1,\ldots,j_N\}\subset \N$ be arbitrary.
%Suppose that the step sizes satisfy the
%monotonicity property $\tau_j \leq \tau_{j-1}$ for $j \geq 2$ except for $j \in \{j_1,\ldots,j_N\}$.
%For the iterates $(u^j,p^j,\l^j)_{j=0,1,\dots}$ of Algorithm~\ref{alg:alg_2}
%and corresponding errors $\d_\l^j = \l-\l^j$, $\d_p^j = p-p^j$, 
%and $\d_u^j = u-u^j$, and every $J\ge 1$ we then have that
%\[\begin{split}
%& \frac{1}{2} \Big(\|\d_\l^J\|_Y^2 + \tau_J^2 \|B \d_u^J\|_Y^2 \Big) \\
%&+ \sum_{j=1}^J \tau_j  \Bigl(\h{\varrho}_G(u,u^j) + \h{\varrho}_F(p,p^j) + \h{\varrho}_G(u^{j-1},u^j) + \frac{\tau_j}{2} \|d_t \l^j\|_Y^2  
%+ \frac{\tau_j^3}{2} \|d_t B u^j\|_Y^2 \Bigr) \\
%& \le  \frac{1}{2} \Big(\|\d_\l^0\|_Y^2 + \tau_1^2 \|B\d_u^0\|_Y^2 + \sum_{k=1}^N(\tau_{j_k}^2-\tau_{j_k-1}^2) \|B \d_u^{j_k-1}\|_Y^2 \Big).
%\end{split}\]
%In particular, $\l^j-\l^{j-1}\to 0$ and $\tau_j(Bu^j-u^{j-1}) \to 0$ as $j\to \infty$ and
%Algorithm~\ref{alg:alg_2} terminates.
%\end{theorem}

\begin{theorem}[Convergence]\label{thm:conv}
Let $(u,p;\l)$ be a saddle point for $\cL_\tau$.
Suppose that the step sizes satisfy the
monotonicity property 
\[
0 < \underline{\tau} \leq \tau_{j+1} \leq \tau_j
\]
for $j \geq 1$.
For the iterates $(u^j,p^j;\l^j)$, $j\geq 0$, of Algorithm~\ref{alg:alg_2},
the corresponding differences $\d_\l^j = \l-\l^j$, $\d_p^j = p-p^j$ and $\d_u^j = u-u^j$,
and the distance
\[
D_j^2=\|\d_\l^j\|_Y^2 + \tau_j^2 \|B \d_u^j\|_Y^2,
\]
we have that for every $J\ge 1$ it holds
\[\begin{split}
\frac{1}{2} D_J^2 + \sum_{j=1}^J \Bigl( \tau_j \bigl(\h{\varrho}_G(u,u^j) + \h{\varrho}_F(p,p^j) + \h{\varrho}_G(u^{j-1},u^j)\bigr) + \frac{1}{2}R_j^2\Bigr)
\le  \frac{1}{2} D_0^2.
\end{split}\]
In particular, $R_j \to 0$ as $j\to \infty$ and Algorithm~\ref{alg:alg_2} terminates.
\end{theorem}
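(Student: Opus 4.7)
The plan is to derive a per-step inequality of the form
\[
(D_j^2 - D_{j-1}^2) + R_j^2 + 2\tau_j\bigl[\h{\varrho}_F(p,p^j) + \h{\varrho}_G(u,u^j) + \h{\varrho}_G(u^{j-1},u^j)\bigr] \le 0
\]
for each $j\ge 1$ and then to sum over $j$. First I would combine the variational inequalities of Lemma~\ref{lemma:opt cond} tested with $(v,q)=(u^j,p^j)$ with those of Lemma~\ref{lemma:dec opt cond} tested with $(v,q)=(u,p)$. The $F$- and $G$-values cancel, leaving
\[
(\tl^j - \l, \d_p^j)_Y + (\d_\l^j, B\d_u^j)_Y + \h{\varrho}_F(p, p^j) + \h{\varrho}_G(u, u^j) \le 0.
\]

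Next I would rewrite everything in terms of iteration differences. The definition $\tl^j = \l^{j-1} + \tau_j(Bu^{j-1}-p^j)$ together with the update in step~(4) of Algorithm~\ref{alg:alg_2} gives $\tl^j - \l^j = \tau_j B(u^{j-1}-u^j)$ and $\l^j - \l^{j-1} = \tau_j(Bu^j - p^j)$, from which $\tl^j - \l = -\d_\l^j + \tau_j B(u^{j-1}-u^j)$ and, since $p = Bu$, $\d_p^j = B\d_u^j - (\d_\l^j - \d_\l^{j-1})/\tau_j$. Substituting and multiplying by $2\tau_j$, two applications of the polarization identity $2(a-b,a)_Y = \|a\|_Y^2 - \|b\|_Y^2 + \|a-b\|_Y^2$ (the discrete product rule~\eqref{eq:product rule 2} in the form $2d_t a^j\cdot a^j = d_t|a^j|^2 + \tau_j|d_t a^j|^2$) convert the quadratic contributions into telescoping differences plus precisely the residual piece $R_j^2 = \tau_j^2\|B(u^{j-1}-u^j)\|_Y^2 + \|\l^j-\l^{j-1}\|_Y^2$. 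The step-size monotonicity $\tau_{j}\le\tau_{j-1}$ is invoked exactly once: since $\tau_j^2 \le \tau_{j-1}^2$, replacing $-\tau_j^2\|B\d_u^{j-1}\|_Y^2$ by the smaller $-\tau_{j-1}^2\|B\d_u^{j-1}\|_Y^2$ preserves the sign and produces the clean telescoping quantity $D_j^2 - D_{j-1}^2$.

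What remains is a cross term $2\tau_j(B(u^{j-1}-u^j),\l^j-\l^{j-1})_Y$, which is the crux. Testing the second variational inequality of Lemma~\ref{lemma:dec opt cond} at iteration $j$ with $v=u^{j-1}$ and at iteration $j-1$ with $v=u^j$ and adding the two cancels the $G$-values and yields the monotonicity estimate
\[
(B(u^{j-1}-u^j),\l^j-\l^{j-1})_Y \ge \h{\varrho}_G(u^{j-1},u^j) \ge 0.
\]
Replacing the cross term by this lower bound preserves the $\le 0$ direction and produces the desired per-step inequality. Summing from $j = 1$ to $J$, dividing by two, and using the telescope $\sum_{j=1}^J (D_j^2 - D_{j-1}^2) = D_J^2 - D_0^2$ yields the claimed estimate. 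Finally, summability of $R_j^2$ implies $R_j\to 0$, so the stopping criterion in step~(6) of Algorithm~\ref{alg:alg_2} is eventually met. The principal obstacles are (i) identifying the correct polarization that produces the full $R_j^2$ in one stroke, (ii) applying the step-size monotonicity exactly at the one place where it yields a clean $D_{j-1}^2$ without spoiling other terms, and (iii) getting the cross-term bound with the correct sign, which depends on combining the two consecutive-iteration optimality inequalities in precisely the right order.
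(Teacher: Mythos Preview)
Your proposal is correct and follows essentially the same route as the paper's proof: the same combination of Lemma~\ref{lemma:opt cond} and Lemma~\ref{lemma:dec opt cond}, the same polarization identities (which the paper writes as the discrete product rules~\eqref{eq:product rule 2}--\eqref{eq:product rule 3}), the same consecutive-iterate monotonicity estimate for the cross term, and the same use of $\tau_j\le\tau_{j-1}$ to close the telescope. The only cosmetic difference is that the paper organizes the computation via the difference-quotient notation $d_t$ and splits it into two intermediate inequalities~\eqref{first step conv}--\eqref{second step conv} (in which the $\d_p^j$-terms cancel upon addition), whereas you substitute for $\d_p^j$ directly and carry the cross term explicitly; the algebra is identical.
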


\begin{proof}
Choosing $(v,q)=(u^j,p^j)$ in Lemma ~\ref{lemma:opt cond} and $(v,q)=(u,p)$ in Lemma
~\ref{lemma:dec opt cond} and adding corresponding inequalities we obtain
\begin{align*}
(\tl^j-\l,p-p^j)_Y + \h{\varrho}_F(p,p^j) &\leq 0, \\
(\l -\l^j,B(u-u^j))_Y + \h{\varrho}_G(u,u^j) &\leq 0.
\end{align*}
Adding the inequalities, inserting ~$\l^j$, and using that $Bu=p$ we obtain
\[
\h{\varrho}_F(p,p^j) + \h{\varrho}_G(u,u^j) \leq (\l-\l^j,B u^j-p^j)_Y + (\l^j-\tl^j,p-p^j)_Y.
\]
With $\l^j-\tl^j=\tau_j B(u^j-u^{j-1})=-\tau_j^2 d_t B \d_u^j$ and $Bu^j-p^j=d_t \l^j = -d_t \d_\l^j$
we get
\begin{equation}\label{first step conv}
\h{\varrho}_F(p,p^j) + \h{\varrho}_G(u,u^j) \leq -(\d_\l^j,d_t \d_\l^j)_Y - \tau_j^2(d_t B \d_u^j,\d_p^j)_Y.
\end{equation}
Testing the optimality conditions of ~$u^j$ and ~$u^{j-1}$ with $v=u^{j-1}$
and $v=u^j$, respectively, and adding the corresponding inequalities gives
\[
\h{\varrho}_G(u^{j-1},u^j) \leq -\tau_j^2(d_t \l^j,d_t B u^j)_Y.
\]
Using $d_t \l^j = Bu^j-p^j$ and inserting $p=Bu$ on the right-hand side
yields
\begin{equation}\label{second step conv}
\begin{split}
\h{\varrho}_G(u^{j-1},u^j) \leq & \; -\tau_j(B(u^j-u)+p-p^j,B(u^j-u^{j-1}))_Y \\
= & \; -\tau_j^2(B \d_u^j,d_tB\d_u^j)_Y + \tau_j^2(\d_p^j,d_tB\d_u^j)_Y.
\end{split}
\end{equation}
Adding ~\eqref{first step conv} and ~\eqref{second step conv} and using the discrete
product rules ~\eqref{eq:product rule 2} and ~\eqref{eq:product rule 3} gives
\begin{equation}\label{third step conv}
\begin{split}
& \; \h{\varrho}_F(p,p^j) + \h{\varrho}_G(u,u^j) + \h{\varrho}_G(u^{j-1},u^j) + \frac{\tau_j}{2}\|d_t \l^j\|_Y^2 + \frac{\tau_j^3}{2}\|d_t B \d_u^j\|_Y^2 \\
\leq & - \frac{d_t}{2} \|\d_\l^j\|_Y^2 - \tau_j^2 \frac{d_t}{2} \|B \d_u^j\|_Y^2 \\
= & - \frac{d_t}{2} \|\d_\l^j\|_Y^2 - d_t \biggl(\frac{\tau_j^2}{2}\|B\d_u^j\|_Y^2\biggr) + \biggl(d_t \frac{\tau_j^2}{2}\biggr)\|B \d_u^{j-1}\|_Y^2.
\end{split}
\end{equation}

Multiplication by ~$\tau_j$, summation over $j=1,\ldots,J$, and noting that
$R_j^2=\tau_j^2\|d_t \l^j\|_Y^2 + \tau_j^4\|d_t Bu^j\|_Y^2$ yields
\[
\begin{split}
& \frac{1}{2} \Big(\|\d_\l^J\|_Y^2 + \tau_J^2 \|B \d_u^J\|_Y^2 \Big) \\
&+ \sum_{j=1}^J \tau_j  \Bigl(\h{\varrho}_G(u,u^j) + \h{\varrho}_F(p,p^j) + \h{\varrho}_G(u^{j-1},u^j) + \frac{1}{2\tau_j}R_j^2 \Bigr) \\
& \le  \frac{1}{2} \Big(\|\d_\l^0\|_Y^2 + \tau_0^2 \|B\d_u^0\|_Y^2 + \sum_{j=1}^J \tau_j (d_t \tau_j^2) \|B \d_u^{j-1}\|_Y^2 \Big).
\end{split}
\]
The fact that $d_t \tau_j^2 \leq 0$ proves the assertion.
\end{proof}

\begin{remarks}
(1) Note that Algorithm ~\ref{alg:alg_2} is convergent independently of the choice $\tau_0>0$.\\
(2) The estimate shows that a large step size ~$\tau_0$ may affect the convergence behavior.
However, experiments indicate that the algorithm is slow if the step size is chosen too small. This
motivates to consider a variable step size that is adjusted to the performance of the algorithm during
the iteration.\\
(3) If we change the order of minimization in Algorithm ~\ref{alg:alg_2} we obtain the estimate with $\d_p^J$,
$d_t p^j$, $\h{\varrho}_F(p^{j-1},p^j)$, $\d_p^0$ and $\d_p^{j-1}$ instead of $B\d_u^J$,
$d_t B u^j$, $\h{\varrho}_G(u^{j-1},u^j)$, $B\d_u^0$ and $B\d_u^{j-1}$, respectively. The second
minimization should thus be carried out with respect to the variable for which we have strong convexity
to have control over the distance between two consecutive iterates.\\
(4) If ~$X$ and ~$Y$ are finite element spaces related to a triangulation with maximal mesh size ~$h>0$
and if we have $u^0$ with the approximation property
$\|B(u-u^0)\| \leq c h^\a$ we may choose the initial step size ~$\tau_0$ as $\tau_0=h^{-\a}$. In general,
we may initialize the algorithm with a sufficiently large step size ~$\tau_0$ and gradually decrease the
step size, e.g., whenever the algorithm computes iterates which do not lead to a considerable decrease in the residual.\\
(5) Note that the convergence proof allows for finitely many reinitializations of the step size. If
$u^j:=u^0$ and $\l^j:=\l^0$ whenever the step size is reinitialized, this resembles a restart of the
algorithm. To be more precise, if $J_1,\ldots,J_L$ denote the iterations after which the algorithm is
reinitialized, i.e., we set $u^{J_k}:=u^0$ and $\l^{J_k}:=\l^0$ and $\tau_{J_k+1}=\tau_0$, $k=1,\ldots,L$,
we obtain for any $1 \leq k \leq L$ and any $J_k\leq J < J_{k+1}$
\[
\frac{1}{2} D_J^2 + \sum_{j=J_k}^J \Bigl( \tau_j \bigl(\h{\varrho}_G(u,u^j) + \h{\varrho}_F(p,p^j) + \h{\varrho}_G(u^{j-1},u^j)\bigr) + \frac{1}{2}R_j^2\Bigr)
\le  \frac{1}{2} D_0^2,
\]
where we used that $\d_\l^{J_k}=\d_\l^0$ and $B \d_u^{J_k}=B\d_u^0$.
Summation over $k=1,\ldots,L$ then gives for any $J \geq 1$
\[
\frac{L}{2} D_J^2 + \sum_{j=1}^J \Bigl( \tau_j \bigl(\h{\varrho}_G(u,u^j) + \h{\varrho}_F(p,p^j) + \h{\varrho}_G(u^{j-1},u^j)\bigr) + \frac{1}{2}R_j^2\Bigr)
\le  \frac{L}{2} D_0^2.
\]
(6) The conditions on the step size are the same as in ~\cite{km:98}.
\end{remarks}

The residuals ~$R_j$ control the distance between iterates and
a saddle-point $(u,p;\l)$ provided that strong coercivity applies.

\begin{corollary}\label{cor:residual control}
For all $j \geq 1$ we have
\[\begin{split}
\h{\varrho}_F(p,p^j) + \h{\varrho}_G(u,u^j) + \h{\varrho}_G(u^{j-1},u^j) \leq 2C_0 R_j
\end{split}\]
with $C_0=\max\bigl\{\frac{1}{\tau_j}\|\l\|_Y+\|Bu\|_Y, \frac{1}{\tau_j}\|\l^j\|_Y+\|Bu^j\|_Y \bigr\}$.
\end{corollary}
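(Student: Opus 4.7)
My plan is to revisit the two short inequalities that open the proof of Theorem~\ref{thm:conv} and combine them differently, since those are exactly the variational consequences of the optimality conditions that we need. From choosing $(v,q)=(u^j,p^j)$ in Lemma~\ref{lemma:opt cond} and $(v,q)=(u,p)$ in Lemma~\ref{lemma:dec opt cond}, and then adding the two pairs of inequalities, one obtains
\[
\h{\varrho}_F(p,p^j) + \h{\varrho}_G(u,u^j) \le (\l-\l^j, Bu^j-p^j)_Y + (\l^j-\tl^j, p-p^j)_Y.
\]
Testing the optimality conditions for $u^j$ and $u^{j-1}$ against each other yields the analogous bound
\[
\h{\varrho}_G(u^{j-1},u^j) \le -(\l^j-\l^{j-1}, B(u^j-u^{j-1}))_Y.
\]

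The key manipulation is to add these three estimates and use the identities $\l^j-\tl^j = \tau_j B(u^j-u^{j-1})$, $Bu^j - p^j = \tau_j^{-1}(\l^j-\l^{j-1})$, and $p=Bu$ to collapse the right-hand side. The two terms $\tau_j(B(u^j-u^{j-1}), p-p^j)_Y$ and $-\tau_j(Bu^j-p^j, B(u^j-u^{j-1}))_Y$ combine into $\tau_j(B(u^j-u^{j-1}), p - Bu^j)_Y = \tau_j(B(u^j-u^{j-1}), B(u-u^j))_Y$. This is the crucial step, because it trades the factor $p-p^j$ (which cannot be bounded by the quantities appearing in $C_0$) for $B(u-u^j)$, which can. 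The remaining term becomes $\tau_j^{-1}(\l-\l^j, \l^j-\l^{j-1})_Y$.

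Finally, Cauchy-Schwarz together with the estimates $\|\l^j-\l^{j-1}\|_Y \le R_j$ and $\tau_j\|B(u^j-u^{j-1})\|_Y \le R_j$ built into the definition of $R_j$ gives
\[
\h{\varrho}_F(p,p^j) + \h{\varrho}_G(u,u^j) + \h{\varrho}_G(u^{j-1},u^j) \le \frac{R_j}{\tau_j}\|\l-\l^j\|_Y + R_j\|B(u-u^j)\|_Y,
\]
and the triangle inequality $\|\l-\l^j\|_Y \le \|\l\|_Y+\|\l^j\|_Y$ and $\|B(u-u^j)\|_Y\le \|Bu\|_Y+\|Bu^j\|_Y$ then factors the right-hand side as
\[
R_j\Bigl[\bigl(\tfrac{1}{\tau_j}\|\l\|_Y + \|Bu\|_Y\bigr) + \bigl(\tfrac{1}{\tau_j}\|\l^j\|_Y + \|Bu^j\|_Y\bigr)\Bigr] \le 2 C_0 R_j.
\]

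The only non-routine point is recognizing the algebraic cancellation that converts $p-p^j$ into $B(u-u^j)$; after that, everything is a standard Cauchy-Schwarz-plus-triangle-inequality estimate, and no additional assumption beyond Assumption~\ref{ass:existence of sp} is needed.
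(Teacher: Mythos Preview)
Your argument is correct and is essentially identical to the paper's own proof: the paper simply adds the two labeled inequalities \eqref{first step conv} and \eqref{second step conv} from the proof of Theorem~\ref{thm:conv}, observes that the $\d_p^j$-terms cancel to leave $-(\d_\l^j,d_t\d_\l^j)_Y - \tau_j^2(B\d_u^j,d_tB\d_u^j)_Y$, and then applies Cauchy--Schwarz and the triangle inequality exactly as you do. Your explicit unpacking of the cancellation $p-p^j \rightsquigarrow B(u-u^j)$ is precisely the content of adding \eqref{first step conv} to \eqref{second step conv}.
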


\begin{proof}
Adding ~\eqref{first step conv} and ~\eqref{second step conv} gives
\[\begin{split}
&\h{\varrho}_F(p,p^j) + \h{\varrho}_G(u,u^j) + \h{\varrho}_G(u^{j-1},u^j) \\
\leq & \; - (\d_\l^j,d_t \d_\l^j)_Y - \tau_j^2(B \d_u^j,d_t B \d_u^j)_Y \\
\leq & \; \frac{1}{\tau_j} \|\l-\l^j\|_Y\|\l^j-\l^{j-1}\|_Y + \tau_j\|B(u-u^j)\|_Y\|B(u^j-u^{j-1})\|_Y \\
\leq & \; \Bigl(\frac{1}{\tau_j} \|\l-\l^j\|_Y + \|B(u-u^j)\|_Y\Bigr)R_j,
\end{split}\]
which implies the estimate.
\end{proof}

\begin{remarks}
(1) If ~$G$ is strongly coercive there exists a coercivity
constant $\a_G>0$ such that $\varrho_G(v,w)=\a_G \|v-w\|_X^2$. Particularly, we then have
$\h{\varrho}_G(v,w)=2\a_G \|v-w\|_X^2$.\\
(2) Corollary ~\ref{cor:residual control} motivates to use the stopping criterion
$R_j \leq \veps_{stop}/C_0$ for a prescribed accuracy ~$\veps_{stop}>0$.
\end{remarks}

\subsection{Monotonicity and convergence rate}\label{subsec:monotonicity and convergence rate}
In ~\cite{hy:15} a sublinear $\mathcal{O}(1/J)$ convergence rate for the ADMM is shown
with a contraction-type analysis. For this, the authors prove ~\eqref{third step conv} with constant step sizes
and a monotonicity property of the residual. The residual of Algorithm ~\ref{alg:alg_2} also enjoys
a monotonicity property which is stated in the following proposition which is a generalization of
~\cite[Thm. 5.1]{hy:15}.

\begin{proposition}[Monotonicity of residual]\label{prop:monotone}
For all $j \geq 1$ we have
\[
\begin{split}
2\tau_{j+1}\bigl(&\h{\varrho}_F(p^j,p^{j+1}) + \h{\varrho}_G(u^j,u^{j+1})\bigr) + R_{j+1}^2 \leq R_j^2
\end{split}
\]
if $\tau_{j+1} \leq \tau_j$. Particularly, the residual is non-increasing.
\end{proposition}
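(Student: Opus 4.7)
The plan is to mimic the proof of Theorem~\ref{thm:conv} with the saddle-point $(u,p;\l)$ replaced by the iterate $(u^j,p^j;\l^j)$. Applying Lemma~\ref{lemma:dec opt cond} at step $j+1$ with test $(v,q)=(u^j,p^j)$ and at step $j$ with test $(v,q)=(u^{j+1},p^{j+1})$ and adding the resulting pairs of $F$- and $G$-variational inequalities produces
\begin{align*}
(\tl^{j+1}-\tl^j,\, p^j-p^{j+1})_Y + \h{\varrho}_F(p^j,p^{j+1}) &\leq 0, \\
(\l^{j+1}-\l^j,\, B(u^{j+1}-u^j))_Y + \h{\varrho}_G(u^j,u^{j+1}) &\leq 0.
\end{align*}
From steps~(2)--(4) of Algorithm~\ref{alg:alg_2} I can read off the algebraic identities $\tl^j = \l^j - \tau_j B(u^j-u^{j-1})$ and $\tl^{j+1}-\l^j = \tau_{j+1}(Bu^j - p^{j+1})$, which together imply the key relation $(\l^{j+1}-\l^j) - \tau_{j+1} B(u^{j+1}-u^j) = \tl^{j+1}-\l^j$.

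Using this relation together with the polarization identity
\[
R_{j+1}^2 = \|\tl^{j+1}-\l^j\|_Y^2 + 2\tau_{j+1}(\l^{j+1}-\l^j,\, B(u^{j+1}-u^j))_Y
\]
and the second inequality above, I obtain the first intermediate bound
\[
R_{j+1}^2 + 2\tau_{j+1}\h{\varrho}_G(u^j,u^{j+1}) \leq \|\tl^{j+1}-\l^j\|_Y^2.
\]

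For the complementary bound on $\|\tl^{j+1}-\l^j\|_Y^2$, I would substitute $\tl^{j+1}-\tl^j = (\tl^{j+1}-\l^j) + \tau_j B(u^j-u^{j-1})$ and $p^j-p^{j+1} = -(\l^j-\l^{j-1})/\tau_j + (\tl^{j+1}-\l^j)/\tau_{j+1}$ into the first inequality, multiply by $2\tau_{j+1}$, and control the pairing
\[
2(\tl^{j+1}-\l^j,\, (\tau_{j+1}/\tau_j)(\l^j-\l^{j-1}) - \tau_j B(u^j-u^{j-1}))_Y
\]
by Young's inequality $2(a,b)_Y \leq \|a\|_Y^2 + \|b\|_Y^2$. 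The decisive algebraic observation is that the indefinite cross term $2\tau_{j+1}(\l^j-\l^{j-1},\, B(u^j-u^{j-1}))_Y$ generated from the substitution cancels exactly with the corresponding cross term in the binomial expansion of $\|(\tau_{j+1}/\tau_j)(\l^j-\l^{j-1}) - \tau_j B(u^j-u^{j-1})\|_Y^2$, leaving
\[
\|\tl^{j+1}-\l^j\|_Y^2 + 2\tau_{j+1}\h{\varrho}_F(p^j,p^{j+1}) \leq (\tau_{j+1}/\tau_j)^2\|\l^j-\l^{j-1}\|_Y^2 + \tau_j^2\|B(u^j-u^{j-1})\|_Y^2.
\]
The monotonicity hypothesis $\tau_{j+1}\leq\tau_j$ will be invoked here for the first and only time, via $(\tau_{j+1}/\tau_j)^2 \leq 1$, to bound the right-hand side by $R_j^2$. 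Chaining this with the previous inequality for $R_{j+1}^2$ yields the claim.

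The main obstacle is precisely this algebraic book-keeping: identifying the Young pairing so that the indefinite term $2\tau_{j+1}(\l^j-\l^{j-1},\, B(u^j-u^{j-1}))_Y$, which is unsigned and does not belong to either $R_j^2$ or $R_{j+1}^2$, cancels exactly, and so that the step-size ratio $(\tau_{j+1}/\tau_j)^2$ attaches solely to the $\|\l^j-\l^{j-1}\|_Y^2$-factor, which is exactly where the monotonicity assumption $\tau_{j+1}\leq\tau_j$ is actually needed.
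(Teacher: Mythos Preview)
Your proof is correct. The argument differs from the paper's in its algebraic organization rather than in substance. The paper combines both variational inequalities into a single chain and processes them with the backward difference-quotient calculus (the $d_t$ notation and the discrete product rules~\eqref{eq:product rule 2}--\eqref{eq:product rule 3}), applying Young's inequality with $\varepsilon=\tau_{j+1}$ to the cross term $\tau_{j+1}^2\bigl(d_t(\tau_{j+1}^2 d_t Bu^{j+1}),\,d_t^2\l^{j+1}\bigr)_Y$. You instead split the argument into two halves bridged by the intermediate quantity $\|\tl^{j+1}-\l^j\|_Y^2$: the $G$-inequality yields $R_{j+1}^2 + 2\tau_{j+1}\h\varrho_G \le \|\tl^{j+1}-\l^j\|_Y^2$, and the $F$-inequality together with Young's inequality yields $\|\tl^{j+1}-\l^j\|_Y^2 + 2\tau_{j+1}\h\varrho_F \le R_j^2$. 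Your route avoids the $d_t$ machinery entirely and makes the cancellation of the indefinite cross term $2\tau_{j+1}(\l^j-\l^{j-1},\,B(u^j-u^{j-1}))_Y$ fully explicit; the paper's route is more compact but leans on notation already introduced for Theorem~\ref{thm:conv}. Both arguments invoke $\tau_{j+1}\le\tau_j$ at exactly the same place---to discard a factor $(\tau_{j+1}/\tau_j)^2\le 1$, which in the paper's formulation is the statement $(d_t\tau_{j+1}^2)\|d_t\l^j\|_Y^2\le 0$.
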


\begin{proof}
Testing the decoupled optimality conditions of $(p^{j+1},u^{j+1})$ and $(p^j,u^j)$ in Lemma ~\ref{lemma:dec opt cond}
with $(q,v)=(p^j,u^j)$ and $(q,v)=(p^{j+1},u^{j+1})$, respectively, and adding the inequalities yields
\[
\begin{split}
&\h{\varrho}_F(p^j,p^{j+1}) + \h{\varrho}_G(u^j,u^{j+1}) \\
\leq & \; -(\tl^{j+1}-\tl^j,p^j-p^{j+1})_Y - (\l^j-\l^{j+1},B(u^j-u^{j+1}))_Y \\
= & \; \tau_{j+1}^2 (d_t \tl^{j+1},d_t p^{j+1})_Y - \tau_{j+1}^2 (d_t\l^{j+1},d_t Bu^{j+1})_Y.
\end{split}
\]
Using $d_t \tl^{j+1} = d_t \l^{j+1} - d_t(\tau_{j+1}^2d_tBu^{j+1})$ and
$d_t p^{j+1} = -d_t^2 \l^{j+1} + d_t B u^{j+1}$ in the first term on the right-hand side,
using the discrete product rules ~\eqref{eq:product rule 2}, ~\eqref{eq:product rule 3} 
and Young's inequality $ab \leq \veps a^2/2 + b^2/(2\veps)$ with $\veps=\tau_{j+1}$ we obtain
\[\begin{split}
&\h{\varrho}_F(p^j,p^{j+1}) + \h{\varrho}_G(u^j,u^{j+1}) \\
\leq & \; -\tau_{j+1}^2 (d_t \l^{j+1},d_t^2 \l^{j+1})_Y - (d_t(\tau_{j+1}^2 d_t B u^{j+1}),\tau_{j+1}^2 d_t B u^{j+1})_Y \\
&+ \tau_{j+1}^2 (d_t(\tau_{j+1}^2d_t B u^{j+1}),d_t^2 \l^{j+1})_Y \\
= & \; -\tau_{j+1}^2 \frac{d_t}{2}\|d_t \l^{j+1}\|_Y^2 - \frac{\tau_{j+1}^3}{2}\|d_t^2\l^{j+1}\|_Y^2 \\
&-\frac{d_t}{2}\|\tau_{j+1}^2 d_t B u^{j+1}\|_Y^2 - \frac{\tau_{j+1}}{2}\|d_t (\tau_{j+1}^2d_t B u^{j+1})\|_Y^2 \\
&+ \tau_{j+1}^2 (d_t(\tau_{j+1}^2d_t B u^{j+1}),d_t^2 \l^{j+1})_Y\\
\leq & \; -\frac{d_t}{2}\|\tau_{j+1}^2 d_t B u^{j+1}\|_Y^2 - d_t\Bigl(\frac{\tau_{j+1}^2}{2}\|d_t\l^{j+1}\|_Y^2\Bigr)
+ \Bigl(d_t \frac{\tau_{j+1}^2}{2}\Bigr) \|d_t \l^j\|_Y^2,
\end{split}\]
which implies the assertion.
\end{proof}

We can now deduce a convergence rate for the residual. This generalizes the result
in ~\cite[Thm. 6.1]{hy:15} for the ADMM with monotonically decreasing step sizes.

\begin{corollary}\label{cor:sublinear convergence}
Suppose that the step sizes satisfy the monotonicity property $\tau_{j+1} \leq \tau_j$
for $j \geq 1$. Then we have
\[
R_J^2 = \mathcal{O}\Bigl(\frac{1}{J}\Bigr).
\]
\end{corollary}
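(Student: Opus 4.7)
The proof is a direct combination of the two preceding results, so I do not expect any real obstacle. The plan is to couple the bounded-sum estimate from the convergence theorem with the non-increasing property of the residual from Proposition~\ref{prop:monotone}.

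First I would read off from Theorem~\ref{thm:conv}, which applies because $\tau_{j+1}\le\tau_j$ and since all symmetrized coercivity terms $\h{\varrho}_F,\h{\varrho}_G$ are nonnegative, the estimate
\[
\sum_{j=1}^J R_j^2 \le D_0^2
\]
uniformly in $J\ge 1$. This shows that the sequence $(R_j^2)_{j\ge 1}$ is summable with sum bounded by the fixed constant $D_0^2=\|\d_\l^0\|_Y^2+\tau_0^2\|B\d_u^0\|_Y^2$, independent of $J$.

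Next I would invoke Proposition~\ref{prop:monotone}, which under the same monotonicity hypothesis on the step sizes guarantees $R_{j+1}^2\le R_j^2$ for all $j\ge 1$. Hence the sequence $(R_j^2)_{j\ge 1}$ is non-increasing, so in particular $R_J^2\le R_j^2$ for every $1\le j\le J$. Summing this trivial inequality over $j=1,\ldots,J$ yields
\[
J\, R_J^2 \le \sum_{j=1}^J R_j^2 \le D_0^2,
\]
and dividing by $J$ gives $R_J^2\le D_0^2/J$, which is the claimed $\mathcal{O}(1/J)$ bound.

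The only thing to keep in mind is that both ingredients need the standing assumption $\tau_{j+1}\le\tau_j$ for $j\ge 1$, which is exactly the hypothesis of the corollary, so no additional conditions on $F$, $G$, or $B$ are required.
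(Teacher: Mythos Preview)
Your proof is correct and follows exactly the same route as the paper: combine the summability bound $\sum_{j=1}^J R_j^2\le D_0^2$ from Theorem~\ref{thm:conv} with the monotonicity $R_J\le R_j$ from Proposition~\ref{prop:monotone} to obtain $J R_J^2\le D_0^2$.
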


\begin{proof}
Proposition ~\ref{prop:monotone} guarantees $R_J \leq R_j$ for $1 \leq j \leq J$ which implies
\[\begin{split}
J R_J^2 \leq \sum_{j=1}^J R_j^2 \leq D_0^2
\end{split}\]
for any $J \geq 1$ where the second inequality is due to Theorem ~\ref{thm:conv}.
\end{proof}

\subsection{Linear convergence}\label{subsec:linear convergence}

We extend the results in ~\cite{dy:16}
concerning the linear convergence of the ADMM to the case
of variable step sizes and prove additionally the linear convergence of the residual
of the Variable-ADMM which serves as the basis for our adjustment rule for the
step size. From now on we assume that
$G$ is Fr\'{e}chet-differentiable with Lipschitz continuous
derivative ~$G'$, i.e., there exists $L_G>0$ such that for all $v,v' \in X$ we have
\[
\|\nabla G(v)-\nabla G(v')\|_X \leq L_G \|v-v'\|_X,
\]
and that ~$G$ is strongly convex with coercivity constant ~$\a_G$, i.e.,
\[
\varrho_G(v,v') \geq \a_G \|v-v'\|_X^2.
\]
Here, ~$\nabla G$ is the representation of ~$G'$ with respect to the inner product $(\cdot,\cdot)_X$,
i.e.,
\[
(\nabla G(v),w)_X = G'(v)[w]
\]
for all $w \in X$. We further assume that the adjoint of ~$B$ is injective with bounded left-inverse, i.e.,
there exists a constant $\a_B>0$ such that $\|B'\mu\|_X \geq \a_B \|\mu\|_Y$ for all
$\mu \in Y$.

\begin{lemma}\label{lemma:strong convexity and co-coercivity}
For all $v,w \in X$ and any $\theta \in [0,1]$ we have
\[\begin{split}
(\nabla G(v)-\nabla G(w),&\;v-w)_X \\
\geq & \; (1-\theta)\frac{1}{L_G} \|\nabla G(v)-\nabla G(w)\|_X^2 + \theta 2 \a_G \|v-w\|_X^2.
\end{split}\]
\end{lemma}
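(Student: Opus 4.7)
The inequality is a convex combination of two classical estimates, so the plan is to establish each separately and then interpolate. The first ingredient is the Baillon–Haddad co-coercivity estimate
\[
(\nabla G(v)-\nabla G(w),v-w)_X \geq \frac{1}{L_G}\|\nabla G(v)-\nabla G(w)\|_X^2,
\]
which uses only convexity of $G$ (implied by strong convexity) and Lipschitz continuity of $\nabla G$. The second ingredient is the strong-monotonicity estimate
\[
(\nabla G(v)-\nabla G(w),v-w)_X \geq 2\alpha_G \|v-w\|_X^2,
\]
which comes straight from the assumption $\varrho_G(v,w)\geq\alpha_G\|v-w\|_X^2$ and the definition of $\varrho_G$ in Lemma \ref{lemma:opt cond}. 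Multiplying the first by $1-\theta$ and the second by $\theta$ and adding yields the claim.

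For the strong-monotonicity bound, I would invoke Lemma \ref{lemma:opt cond} applied to the subproblem structure only implicitly: the characterization of $\varrho_G$ as a strong-convexity remainder gives
\[
G(v) \geq G(w) + (\nabla G(w),v-w)_X + \alpha_G \|v-w\|_X^2,
\]
and the inequality with $v$ and $w$ swapped. Adding the two and cancelling $G(v)+G(w)$ produces exactly $(\nabla G(v)-\nabla G(w),v-w)_X \geq 2\alpha_G\|v-w\|_X^2$.

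For the co-coercivity bound, the main step is to introduce the auxiliary functional $\phi(x) := G(x) - (\nabla G(w),x)_X$, which is convex and still has an $L_G$-Lipschitz gradient, with unique minimizer at $w$ because $\nabla\phi(w)=0$. The descent lemma applied at the point $x = v - L_G^{-1}\nabla\phi(v)$ gives
\[
\phi(w)\leq \phi\bigl(v-L_G^{-1}\nabla\phi(v)\bigr)\leq \phi(v)-\tfrac{1}{2L_G}\|\nabla\phi(v)\|_X^2,
\]
so that $G(v)-G(w)-(\nabla G(w),v-w)_X \geq \tfrac{1}{2L_G}\|\nabla G(v)-\nabla G(w)\|_X^2$. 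The analogous inequality with $v$ and $w$ interchanged, added to this one, cancels the function values and yields the co-coercivity estimate.

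The combination step is then trivial: a convex weighting of the two estimates by $1-\theta$ and $\theta$ gives exactly the claim for arbitrary $\theta\in[0,1]$. The main obstacle, if any, is merely the descent-lemma argument for Baillon–Haddad; everything else is bookkeeping. Since $G$ is assumed both strongly convex and with Lipschitz gradient, no further regularity or compactness assumptions are needed, and the argument is independent of the operator $B$ and the step size $\tau_j$.
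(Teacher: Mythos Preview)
Your proposal is correct and follows essentially the same approach as the paper: the paper also derives the strong-monotonicity bound by symmetrizing the strong-convexity inequality, invokes the Baillon--Haddad co-coercivity estimate (proved in the appendix via the same auxiliary functional $G_v(w)=G(w)-(\nabla G(v),w)_X$ and minimization of the descent upper bound), and then takes the convex combination. The only cosmetic difference is that the paper makes the descent lemma explicit by introducing the convex functional $g_v(w)=\tfrac{L_G}{2}\|w\|_X^2-G_v(w)$, whereas you invoke it directly.
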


\begin{proof}
Due to the differentiability and strong convexity of ~$G$ we have
\[
(\nabla G(v),w-v)_X + G(v) + \a_G \|w-v\|_X^2 \leq G(w).
\]
Exchanging the roles of ~$v$ and ~$w$ and adding the inequalities gives
\[
(\nabla G(v)-\nabla G(w),v-w)_X \geq 2 \a_G \|v-w\|_X^2.
\]
By Lemma ~\ref{lemma:co-coercivity} stated in Appendix ~\ref{apdx:co-coercivity} we also have
\[
(\nabla G(v)-\nabla G(w),v-w)_X \geq \frac{1}{L_G} \|\nabla G(v)-\nabla G(w)\|_X^2,
\]
which implies the estimate.
\end{proof}

\begin{theorem}[Linear convergence]\label{thm:lin conv}
If $\underline{\tau} \leq \tau_{j+1} \leq \tau_j \leq \overline{\tau}$ for all $j \geq 0$
there exists a sequence $(\g_j)_{j \in \N} \subset (0,1)$ with $\g_j \leq \g <1$ for all $j \geq 1$ such that
\[
D_{j+1}^2 \leq \gamma_{j+1} D_j^2
\]
with ~$D_j$ as in Theorem ~\ref{thm:conv}.
\end{theorem}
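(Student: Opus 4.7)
The plan is to convert the conservation-type inequality from the proof of Theorem~\ref{thm:conv} into a contractive recurrence by exploiting strong convexity of $G$, Lipschitz continuity of $\nabla G$, and the bounded left-inverse of $B'$ to control $D_{j+1}$ in terms of $\h{\varrho}_G(u,u^{j+1})$.

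First I would multiply inequality~\eqref{third step conv} (taken at index $j+1$) by $\tau_{j+1}$. Using $\tau_{j+1} d_t a^{j+1} = a^{j+1}-a^j$ together with $R_{j+1}^2 = \tau_{j+1}^2\|d_t\l^{j+1}\|_Y^2 + \tau_{j+1}^4\|d_t Bu^{j+1}\|_Y^2$ and simplifying the telescoping terms on the right yields
\[
\tau_{j+1}\bigl[\h{\varrho}_F(p,p^{j+1}) + \h{\varrho}_G(u,u^{j+1}) + \h{\varrho}_G(u^j,u^{j+1})\bigr] + \tfrac{1}{2}R_{j+1}^2 + \tfrac{1}{2}D_{j+1}^2 \leq \tfrac{1}{2}\bigl[\|\d_\l^j\|_Y^2 + \tau_{j+1}^2\|B\d_u^j\|_Y^2\bigr].
\]
Since $\tau_{j+1}\leq\tau_j$, the right-hand side is bounded by $\tfrac{1}{2}D_j^2$. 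Dropping the non-negative contributions $\h{\varrho}_F(p,p^{j+1})$, $\h{\varrho}_G(u^j,u^{j+1})$ and $\tfrac{1}{2}R_{j+1}^2$, it suffices to establish a uniform estimate $D_{j+1}^2 \leq \overline{C}\,\h{\varrho}_G(u,u^{j+1})$.

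For the lower bound I would rewrite the optimality conditions using Fréchet-differentiability: the second inequality in Lemma~\ref{lemma:opt cond} simplifies to $\nabla G(u) + B'\l = 0$, while steps~(3)--(4) of Algorithm~\ref{alg:alg_2} give $\nabla G(u^{j+1}) + B'\l^{j+1} = 0$. Subtracting and combining the Lipschitz property of $\nabla G$ with the stability $\|B'\mu\|_X \geq \a_B\|\mu\|_Y$ yields
\[
\a_B \|\d_\l^{j+1}\|_Y \leq \|B'\d_\l^{j+1}\|_X = \|\nabla G(u)-\nabla G(u^{j+1})\|_X \leq L_G\|\d_u^{j+1}\|_X.
\]
Since $\h{\varrho}_G(u,u^{j+1}) = (\nabla G(u)-\nabla G(u^{j+1}),u-u^{j+1})_X$, Lemma~\ref{lemma:strong convexity and co-coercivity} with a free parameter $\t\in(0,1)$ splits this quantity into a co-coercivity part controlling $\|\nabla G(u)-\nabla G(u^{j+1})\|_X^2$ (and hence, via the preceding stability inequality, $\|\d_\l^{j+1}\|_Y^2$) and a strong-convexity part controlling $\|\d_u^{j+1}\|_X^2$. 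Using $\|B\d_u^{j+1}\|_Y\leq\|B\|\,\|\d_u^{j+1}\|_X$, this yields $D_{j+1}^2 \leq C(\t,\tau_{j+1})\,\h{\varrho}_G(u,u^{j+1})$ with $C(\t,\tau_{j+1})$ uniformly bounded on $[\underline{\tau},\overline{\tau}]$ for any fixed $\t\in(0,1)$.

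Combining both ingredients produces $(1 + 2\tau_{j+1}/C(\t,\tau_{j+1}))D_{j+1}^2 \leq D_j^2$, so $\gamma_{j+1} := (1+2\tau_{j+1}/C(\t,\tau_{j+1}))^{-1}\in(0,1)$ is uniformly bounded above by $\gamma := (1+2\underline{\tau}/\overline{C})^{-1} < 1$, where $\overline{C}$ depends only on $L_G,\a_G,\a_B,\|B\|$ and $\overline{\tau}$. The main technical obstacle lies in the discrete product-rule bookkeeping of the first step, which has to be arranged so that the \emph{mismatched} weight $\tau_{j+1}^2\|B\d_u^j\|_Y^2$ (rather than the natural $\tau_j^2\|B\d_u^j\|_Y^2$) appears on the right-hand side; the monotonicity $\tau_{j+1}\leq\tau_j$ is then precisely what is needed to telescope it into $D_j^2$. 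The remaining optimization in $\t$ and the passage from $\|\d_u^{j+1}\|_X^2$ back to $\|\d_\l^{j+1}\|_Y^2$ via the Lipschitz/injectivity structure are routine.
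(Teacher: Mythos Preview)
Your approach is essentially that of the paper: derive the one-step estimate $2\tau_{j+1}\h{\varrho}_G(u,u^{j+1})+D_{j+1}^2\le D_j^2$ from~\eqref{third step conv} and then convert the coercivity term into a multiple of $D_{j+1}^2$ via the identities $\nabla G(u)=-B'\l$ and $\nabla G(u^{j+1})=-B'\l^{j+1}$.

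One small correction is needed: the claimed equality $\h{\varrho}_G(u,u^{j+1})=(\nabla G(u)-\nabla G(u^{j+1}),u-u^{j+1})_X$ is in general only an inequality $\le$ (equality holds when $\varrho_G$ is taken to be the Bregman distance, but in the paper $\varrho_G$ is any nonnegative functional satisfying Lemma~\ref{lemma:opt cond}), so Lemma~\ref{lemma:strong convexity and co-coercivity} does not directly yield a lower bound on $\h{\varrho}_G$. The paper sidesteps this by returning to the derivation of~\eqref{eq:lin conv 1} and replacing the bound~\eqref{eq:lin conv 2} itself by the stronger estimate $(\l^{j+1}-\l,B(u-u^{j+1}))_Y\ge (\rho_{j+1}/\tau_{j+1})D_{j+1}^2$ obtained from Lemma~\ref{lemma:strong convexity and co-coercivity}, which also yields the explicit rate $\gamma_{j+1}=(1+2\rho_{j+1})^{-1}$ optimizable in $\tau$. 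Your argument is easily repaired without that detour: the chain $\a_B\|\d_\l^{j+1}\|_Y\le L_G\|\d_u^{j+1}\|_X$ you already derived, together with $\h{\varrho}_G(u,u^{j+1})\ge 2\a_G\|\d_u^{j+1}\|_X^2$ and $\|B\d_u^{j+1}\|_Y\le c_B\|\d_u^{j+1}\|_X$, gives $D_{j+1}^2\le C\,\h{\varrho}_G(u,u^{j+1})$ directly, so Lemma~\ref{lemma:strong convexity and co-coercivity} is in fact unnecessary along your route (at the price of a slightly cruder constant).
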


\begin{proof}
From inequality ~\eqref{third step conv} it follows that
\begin{equation}\label{eq:lin conv 1}
2\tau_{j+1} \h{\varrho}_G(u,u^{j+1}) + D_{j+1}^2 \leq D_j^2.
\end{equation}
Here, the term ~$\h{\varrho}_G(u,u^{j+1})$ on the left-hand side results from the estimate
\begin{equation}\label{eq:lin conv 2}
\h{\varrho}_G(u,u^{j+1}) \leq (\l^{j+1}-\l,B(u-u^{j+1}))_Y.
\end{equation}
We aim at replacing this bound by a stronger bound using the differentiability of ~$G$ and
Lemma ~\ref{lemma:strong convexity and co-coercivity}.
With the differentiability of ~$G$ the optimality conditions for ~$u$ and ~$u^{j+1}$ in
Lemma ~\ref{lemma:opt cond} and Lemma ~\ref{lemma:dec opt cond}, respectively,
can be rewritten as
\[\begin{split}
-(\l,Bv)_Y &= (\nabla G(u),v)_X, \\
-(\l^{j+1},Bv)_Y &= (\nabla G(u^{j+1}),v)_X
\end{split}\]
for all $v \in X$. Particularly, $\nabla G(u)=-B' \l$ and $\nabla G(u^{j+1})=-B' \l^{j+1}$.
Choosing $v=u-u^{j+1}$ and subtracting we obtain
\[
(\l^{j+1}-\l,B(u-u^{j+1}))_Y = (\nabla G(u)-\nabla G(u^{j+1}),u-u^{j+1})_X.
\]
Using Lemma ~\ref{lemma:strong convexity and co-coercivity} we find with the bounds
$\|Bv\|_Y\leq c_B\|v\|_X$ and $\|B' \mu\|_X \geq \a_B \|\mu\|_Y$ that
\[\begin{split}
(\l^{j+1}-\l,&\;B(u-u^{j+1}))_X \\
\geq & \; (1-\theta)\frac{1}{L_G} \|B'(\l-\l^{j+1})\|_X^2 + \theta 2 \a_G \|u-u^{j+1}\|_X^2 \\
\geq & \; (1-\theta)\frac{\a_B^2}{L_G} \|\d_\l^{j+1}\|_X^2 + \theta \frac{2\a_G}{c_B^2} \|B\d_u^{j+1}\|_X^2 \\
= & \; (1-\theta)s_1\frac{1}{\tau_{j+1}} \|\d_\l^{j+1}\|_X^2 + \theta s_2 \tau_{j+1} \|B\d_u^{j+1}\|_X^2
\end{split}\]
where $s_1=\a_B^2 \tau_{j+1}/L_G$ and $s_2=2 \a_G/(c_B^2 \tau_{j+1})$.
Choosing ~$\theta \in [0,1]$ such that $(1-\theta)s_1=\theta s_2$, i.e.,
\[
\theta = \biggl(1+\frac{s_2}{s_1}\biggr)^{-1},
\]
we have
\[
\rho_{j+1} = (1-\theta)s_1 = \theta s_2 = \frac{s_1s_2}{s_1+s_2} >0.
\]
This choice of ~$\theta$ yields
\[
(\l^{j+1}-\l,B(u-u^{j+1}))_Y \geq \rho_{j+1} \biggl(\tau_{j+1} \|B\d_u^{j+1}\|_X^2 + \frac{1}{\tau_{j+1}}\|\d_\l^{j+1}\|_Y^2\biggr).
\]
Using this estimate instead of ~\eqref{eq:lin conv 2} in the proof of ~\eqref{eq:lin conv 1} we obtain
\[
2\rho_{j+1}D_{j+1}^2 + D_{j+1}^2 \leq D_j^2.
\]
The assertion follows with the choice $\g_{j+1}=(1+2\rho_{j+1})^{-1}$.
\end{proof}

\begin{proposition}[Linear convergence of residual]\label{prop:lin conv residual}
If $\underline{\tau} \leq \tau_{j+1} \leq \tau_j \leq \overline{\tau}$ for all $j \geq 0$
there exists a sequence $(\g_j)_{j \in \N} \subset (0,1)$ with $\g_j \leq \g <1$ for all $j \geq 1$ such that
\[
R_{j+1}^2 \leq \gamma_{j+1} R_j^2.
\]
\end{proposition}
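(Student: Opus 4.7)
The plan is to refine the proof of Proposition~\ref{prop:monotone} by sharpening its very first step with the co-coercivity argument that powered Theorem~\ref{thm:lin conv}, now applied to pairs of successive iterates $(u^j,u^{j+1})$ in place of deviations from a saddle-point. Starting from Lemma~\ref{lemma:dec opt cond}, testing the $u$-optimality at step $j$ with $v=u^{j+1}$ and at step $j+1$ with $v=u^j$ and adding the two inequalities yields
\[
\hat{\varrho}_G(u^j,u^{j+1})
\leq (\lambda^{j+1}-\lambda^j,B(u^j-u^{j+1}))_Y.
\]
Using that $G$ is differentiable, the $u$-subproblem optimality is equivalent to $B'\lambda^j=-\nabla G(u^j)$ and $B'\lambda^{j+1}=-\nabla G(u^{j+1})$, so the right-hand side rewrites as $(\nabla G(u^j)-\nabla G(u^{j+1}),u^j-u^{j+1})_X$.

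I would then invoke Lemma~\ref{lemma:strong convexity and co-coercivity} with a free parameter $\theta\in[0,1]$ together with the bounds $\|Bv\|_Y\leq c_B\|v\|_X$ and $\|B'\mu\|_X\geq \alpha_B\|\mu\|_Y$. Balancing the two resulting coefficients exactly as in the proof of Theorem~\ref{thm:lin conv} --- with $s_1=\alpha_B^2\tau_{j+1}/L_G$, $s_2=2\alpha_G/(c_B^2\tau_{j+1})$, $\theta=s_1/(s_1+s_2)$, and $\rho_{j+1}=s_1s_2/(s_1+s_2)$ --- gives the lower bound
\[
(\lambda^{j+1}-\lambda^j,B(u^j-u^{j+1}))_Y
\geq \frac{\rho_{j+1}}{\tau_{j+1}} R_{j+1}^2.
\]
Adding this inequality to the $p$-part $\hat{\varrho}_F(p^j,p^{j+1})\leq -(\tilde{\lambda}^{j+1}-\tilde{\lambda}^j,p^j-p^{j+1})_Y$ produces exactly the same right-hand side as the first step of the proof of Proposition~\ref{prop:monotone}, but with $\hat{\varrho}_G$ on the left replaced by the stronger quantity $\rho_{j+1}R_{j+1}^2/\tau_{j+1}$.

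From here the remaining manipulations of the monotonicity proof --- rewriting the right-hand side via $d_t\tilde{\lambda}^{j+1}$ and $d_t p^{j+1}$, applying the product rules~\eqref{eq:product rule 2} and \eqref{eq:product rule 3} together with Young's inequality at $\varepsilon=\tau_{j+1}$, and finally multiplying by $2\tau_{j+1}$ --- act only on the right-hand side and carry through verbatim, giving
\[
2\tau_{j+1}\hat{\varrho}_F(p^j,p^{j+1}) + 2\rho_{j+1}R_{j+1}^2 + R_{j+1}^2
\leq R_j^2
\]
once the nonpositive boundary term (which uses $\tau_{j+1}\leq\tau_j$) is discarded. Dropping the nonnegative $\hat{\varrho}_F$ contribution yields $R_{j+1}^2\leq\gamma_{j+1}R_j^2$ with $\gamma_{j+1}=(1+2\rho_{j+1})^{-1}$. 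The uniform bound $\gamma_{j+1}\leq\gamma<1$ follows from noting that $s_1 s_2 = 2\alpha_B^2\alpha_G/(L_G c_B^2)$ is a $\tau$-independent constant and that $s_1+s_2$ is uniformly bounded on $[\underline{\tau},\overline{\tau}]$, so $\rho_{j+1}$ is bounded below by a fixed positive number. I expect the main subtlety to be making the substitution precise: one does not replace $\hat{\varrho}_G$ inside an existing inequality (which would be illegal), but rather derives the new starting inequality by adding the $p$-part of the decoupled optimality directly to the lower bound above, which happens to share the same right-hand side as the aggregated $(p,u)$-inequality of the monotonicity proof.
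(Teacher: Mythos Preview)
Your proposal is correct and follows essentially the same approach as the paper's proof: both recognize that the $\hat{\varrho}_G(u^j,u^{j+1})$ term in Proposition~\ref{prop:monotone} arises from the inequality $\hat{\varrho}_G(u^j,u^{j+1}) \leq (\lambda^{j+1}-\lambda^j,B(u^j-u^{j+1}))_Y$, and both replace this bound by the sharper co-coercivity estimate of Lemma~\ref{lemma:strong convexity and co-coercivity} (via $B'\lambda^k=-\nabla G(u^k)$), yielding the same contraction factor $\gamma_{j+1}=(1+2\rho_{j+1})^{-1}$ as in Theorem~\ref{thm:lin conv}. The paper merely states this in two lines, whereas you spell out the substitution and the uniform bound on $\rho_{j+1}$ explicitly; your final remark that one adds the new lower bound to the $p$-part rather than ``replacing inside an existing inequality'' is exactly the right justification.
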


\begin{proof}
We have by Proposition ~\ref{prop:monotone} that
\[
2\tau_{j+1} \h{\varrho}_G(u^j,u^{j+1}) + R_{j+1}^2 \leq R_j^2,
\]
where the term ~$\h{\varrho}_G(u^j,u^{j+1})$ on the left-hand side results from the estimate
\[
\h{\varrho}_G(u^j,u^{j+1}) \leq (\l^{j+1}-\l^j,B(u^j-u^{j+1}))_Y.
\]
The optimality conditions for ~$u^j$ and ~$u^{j+1}$ in Lemma ~\ref{lemma:dec opt cond}
and the differentiability of ~$G$ imply
\[
-B' \l^{j+1} = \nabla G(u^{j+1}), \quad -B' \l^j = \nabla G(u^j).
\]
The assertion follows with the same rate ~$\g_{j+1}$ as in Theorem ~\ref{thm:lin conv} using the same
arguments as in the proof of the previous theorem.
\end{proof}

\begin{remark}\label{rmk:optimal step size}
(1) Note that by the previous proposition and Corollary ~\ref{cor:sublinear convergence} we have
\[
R_J^2 = \mathcal{O}(\min\{\g^J,1/J\}).
\]
(2) Minimizing ~$\g_{j+1}$ with respect to ~$\tau_{j+1}$ yields the step size $\tau_{j+1}\equiv \tau$
with corresponding rate ~$\g_{j+1} \equiv \g$ given by
\[
\tau = \biggl(\frac{2 \a_G L_G}{\a_B^2c_B^2}\biggr)^{1/2}, \quad \g = \biggl(1+\biggl(\frac{2\a_GL_G \a_B^2}{c_B^2}\biggr)^{1/2}\biggr)^{-1}.
\]
\end{remark}

\subsection{Step size adjustment}\label{subsec:adaptivity}
The previous discussion shows that the convergence rate critically depends on the step size.
Moreover, the optimized step size may lead to a pessimistic contraction order as observed in ~\cite{dy:16}.
This motivates to incorporate an automated adjustment of the step size.
With regard to Proposition ~\ref{prop:lin conv residual} the idea is to prescribe a contraction factor
$\g \geq \underline{\g}$, start with a large step size $\tau=\overline{\tau}$ and decrease ~$\tau$
whenever the contraction property is violated. When a lower bound ~$\underline{\tau}$ is reached, the algorithm is restarted
with a larger contraction factor ~$\g$. To account for cases which do not satisfy the conditions for linear
convergence of the algorithm, one has to choose an upper bound ~$\overline{\g} \approx 1$ for the contraction
factor to guarantee convergence of the algorithm. We make our procedure precise in the following algorithm
which is identical to Algorithm ~\ref{alg:alg_2} except for the initialization of additional parameters and the
specification of step ~(7) of Algorithm ~\ref{alg:alg_2}.

\begin{algorithm}[Variable-ADMM]\label{alg:acc_alg_2}
Choose $(u^0,\l^0)\in X\times Y$ such that $G(u^0)<\infty$.
Choose $\overline{\tau} > \underline{\tau}>0$, $\d \in (0,1)$, $0<\underline{\g} \leq \overline{\g}<1$,
$\overline{R} \gg 0$ and $\veps_{stop}>0$. Set $j=1$.\\
(1) Set $\g_1=\underline{\g}$, $\tau_1=\overline{\tau}$ and $R_0=\overline{R}$.\\
(2)-(6) As in Algorithm ~\ref{alg:alg_2}.\\
(7) Define $(\tau_{j+1},\g_{j+1})$ as follows:
\begin{itemize}
\item If $R_j \leq \gamma_j R_{j-1}$ or if $\tau_j=\underline{\tau}$ and $\g_j=\overline{\g}$ set
\[\tau_{j+1}=\tau_j \quad \text{and} \quad \g_{j+1}=\g_j.
\]
\item If $R_j > \gamma_j R_{j-1}$ and $\tau_j>\underline{\tau}$ set
\[
\tau_{j+1}=\max\{\d \tau_j,\underline{\tau}\} \quad \text{and} \quad \g_{j+1}=\g_j.
\]
\item If $R_j > \gamma_j R_{j-1}$, $\tau_j=\underline{\tau}$ and $\g_j < \overline{\g}$ set
\[
\tau_{j+1}=\overline{\tau}, \; \g_{j+1} = \min\Bigl\{\frac{\g_j+1}{2},\overline{\g}\Bigr\}, \; u^j = u^0 \; \text{and} \; \l^j=\l^0.
\]
\end{itemize}
(8) Set $j=j+1$ and continue with ~(2).
\end{algorithm}

\begin{remark}
The total number of restarts is bounded by $\lceil\log((\underline{\g}-1)/(\overline{\g}-1))/\log(2)\rceil$.
The minimal number of iterations between two restarts is given by $\lceil\log(\underline{\tau}/\overline{\tau})/\log(\d)\rceil$.
Since the contraction factor is constant between two restarts, i.e., $\g_j\equiv \widetilde{\g}$ for a
$\widetilde{\g} \in [\underline{\g},\overline{\g})$, the maximal number of iterations between two restarts is bounded by
$\lceil\log(\underline{\tau}/\overline{\tau})/\log(\d)\rceil + \lfloor\log(\veps_{stop}/R_1)/\log(\widetilde{\g})\rfloor$.\\
%(2) Numerical experiments show that lower iteration numbers are obtained in some cases if the
%current iterates are kept when the step size is reinitialized, i.e., if in step ~(7) the
%reinitializations $u^j=u^0$ and $\l^j=\l^0$ are omitted. However, for this case,
%we were only able to prove a stability estimate as in Theorem ~\ref{thm:conv} with
%a right-hand side depending on the initial data and containing the possibly large factor
%$\overline{\tau}/\underline{\tau}$.
\end{remark}

\section{Numerical Experiments}\label{sec:numerical experiments}

We tested the ADMM (Algorithm ~\ref{alg:alg_2} with fixed step size), the Variable-ADMM (Algorithm ~\ref{alg:acc_alg_2})
and the Fast-ADMM proposed in ~\cite{gosb:14}, which we present in Appendix ~\ref{apdx:fast-admm} as
Algorithm ~\ref{alg:fast_admm}, for some prototypical
minimization problems which were discretized using low order finite elements. An overview of
relevant parameters and abbreviations is given in the following and concern all three algorithms
if not otherwise stated:
\begin{itemize}
\item \textit{Abbreviations:} $N=\text{\# of iterations for termination}$;
$u_h^{stop}=\text{output}$; $u_h^{ref}=\text{reference solution}$ computed with ADMM,
$\tau=h^{-1}$, $\veps_{stop}=10^{-9}$ (Example ~\ref{ex:obstacle})
and $\tau=h^{-3/2}$, $\veps_{stop}=10^{-4}$ (Example ~\ref{ex:rof})
\item \textit{Geometry:} $\O=(0,1)^2$; coarse mesh ~$\cT_0=\{T_1,T_2\}$;
$\cT_\ell$ generated from ~$\cT_0$ by ~$\ell$ uniform refinements
\item \textit{Mesh sizes:} $h=\sqrt{2}2^{-\ell}$, $\ell=3,\ldots,9$
\item \textit{(Initial) step sizes:} $\overline{\tau}=h^{-m}$, $m=0,\ldots,3$
\item \textit{Initialization:} $u_h^0=0$ and $\l_h^0=0$
\item \textit{Stopping criteria:} (a) $\h{\vrho}_G(u_h^{ref},u_h^j)^{1/2} \leq \veps_{stop}^{(1)}$\\
\hspace*{3.15cm}(b) $R_j \leq \veps_{stop}^{(2)}/C_0$
\item \textit{Stopping tolerance:} $\veps_{stop}^{(1)}=10^{-3}$, $\veps_{stop}^{(2)}=h^2$ (Example ~\ref{ex:obstacle});\\ \hspace*{3.4cm}$\veps_{stop}^{(1)}=10^{-2}$, $\veps_{stop}^{(2)}=h$ (Example ~\ref{ex:rof})
\item \textit{Error:} $E_h=\h{\vrho}_G(u_h^{ref},u_h^{stop})^{1/2}$
\item \textit{Termination:} A hyphen ~($-$) abbreviates ~$N\geq10^3$ (Example ~\ref{ex:obstacle})\\
\hspace*{6.8cm}~$N \geq10^4$ (Example ~\ref{ex:rof})
\item \textit{Fast-ADMM:} $\g=0.999$; $N_{re}=\text{\# of restarts}$ (see step ~(7))
\item \textit{Variable-ADMM:} $\underline{\g}=0.5$, $\d=0.5$, $\underline{\tau}=1$, $\overline{\g}=0.999$;\\
\hspace*{3cm}$N_\tau=\text{\# of $\tau$-adjustments after last restart}$; \\
\hspace*{3cm}$N_\g=\text{\# of $\g$-adjustments}$
\end{itemize}
The number ~$N_\g$ represents also the total number of restarts of the Variable-ADMM when
$\overline{\tau}>\underline{\tau}$. Moreover, the integer ~$N$ also
regards the discarded iterations due to restart in the Fast-ADMM and the Variable-ADMM.

\subsection{Application to obstacle problem}

We consider
\[
G(u_h) = \frac{1}{2} \int_\O |\nabla u_h|^2 \dv{x} - \int_\O f u_h \dv{x}, \quad F(Bu_h) = I_K(u_h)
\]
with $X_\ell = (\cS_0^1(\cT_\ell),(\nabla \cdot,\nabla \cdot))$, $Y_\ell = (\cS_0^1(\cT_\ell),(\cdot,\cdot)_h)$, $B=\operatorname{id}: X \to Y$ and $K$ being the convex set $K=\{v_h \in \cS^1(\cT_\ell): \; v_h \geq \chi\}$ for some obstacle function $\chi \in \cS^1(\cT_\ell)$.

\subsubsection{Example and stopping criterion}

For our experiments we use the following specifications.

\begin{example}\label{ex:obstacle}
We let $f\equiv -5$, $\chi \equiv -1/4$, $\Gamma_D = \partial \O$, $u=0$ on ~$\Gamma_D$.
\end{example}

\begin{table}[!htb]
\centering
\begin{tabular}{|C{0.8cm}||C{0.6cm}|C{1.5cm}||C{0.6cm}|C{1.5cm}||C{0.6cm}|C{1.5cm}||C{0.6cm}|C{1.5cm}|}\firsthline
\multicolumn{1}{|c||}{} & \multicolumn{8}{c|}{ADMM (obstacle; $\veps_{stop}^{(1)}=10^{-3}$, $\tau_j\equiv \overline{\tau}$)} \\ \hline
\multicolumn{1}{|c||}{\rule{0pt}{0.4cm}} & \multicolumn{2}{c||}{$\overline{\tau}=1$} & \multicolumn{2}{c||}{$\overline{\tau}=h^{-1}$} & \multicolumn{2}{c||}{$\overline{\tau}=h^{-2}$} & \multicolumn{2}{c|}{$\overline{\tau}=h^{-3}$}\\ \hline
\multicolumn{1}{|c||}{\rule{0pt}{0.4cm}$\ell$} & \multicolumn{2}{c||}{$N$} & \multicolumn{2}{c||}{$N$} & \multicolumn{2}{c||}{$N$} & \multicolumn{2}{c|}{$N$}\\ \hline
\multicolumn{1}{|c||}{\rule{0pt}{0.4cm}$3$} & \multicolumn{2}{c||}{$880$} & \multicolumn{2}{c||}{$159$} & \multicolumn{2}{c||}{$31$} & \multicolumn{2}{c|}{$30$}\\
\multicolumn{1}{|c||}{4} & \multicolumn{2}{c||}{$-$} & \multicolumn{2}{c||}{$221$} & \multicolumn{2}{c||}{$28$} & \multicolumn{2}{c|}{$217$}\\
\multicolumn{1}{|c||}{5} & \multicolumn{2}{c||}{$-$} & \multicolumn{2}{c||}{$241$} & \multicolumn{2}{c||}{$76$} & \multicolumn{2}{c|}{$-$}\\
\multicolumn{1}{|c||}{6} & \multicolumn{2}{c||}{$-$} & \multicolumn{2}{c||}{$151$} & \multicolumn{2}{c||}{$294$} & \multicolumn{2}{c|}{$-$}\\
\multicolumn{1}{|c||}{7} & \multicolumn{2}{c||}{$-$} & \multicolumn{2}{c||}{$103$} & \multicolumn{2}{c||}{$-$} & \multicolumn{2}{c|}{$-$}\\
\multicolumn{1}{|c||}{8} & \multicolumn{2}{c||}{$-$} & \multicolumn{2}{c||}{$59$} & \multicolumn{2}{c||}{$-$} & \multicolumn{2}{c|}{$-$}\\
\multicolumn{1}{|c||}{9} & \multicolumn{2}{c||}{$-$} & \multicolumn{2}{c||}{$60$} & \multicolumn{2}{c||}{$-$} & \multicolumn{2}{c|}{$-$}\\ \hline
\multicolumn{1}{|c||}{\rule{0pt}{0.4cm}} & \multicolumn{8}{c|}{Fast-ADMM ($\g=0.999$)} \\ \hline
\multicolumn{1}{|c||}{\rule{0pt}{0.4cm}$\ell$} & $N$ & $(N_{re})$ & $N$ & $(N_{re})$ & $N$ & $(N_{re})$ & $N$ & $(N_{re})$ \\ \hline
\multicolumn{1}{|c||}{\rule{0pt}{0.4cm}$3$}  & $  123$ & $(   3)$ & $   45$ & $(   3)$ & $   17$ & $(   1)$ & $   19$ & $(   2)$\\
$4$ & $  241$ & $(   6)$ & $   59$ & $(   2)$ & $   22$ & $(   1)$ & $   51$ & $(   2)$\\
$5$ & $  474$ & $(  17)$ & $   67$ & $(   4)$ & $   34$ & $(   2)$ & $  157$ & $(   2)$\\
$6$ & $ -$ & $( -)$ & $   47$ & $(   2)$ & $   58$ & $(   2)$ & $ -$ & $( -)$\\
$7$ & $ -$ & $( -)$ & $   62$ & $(   6)$ & $  123$ & $(   2)$ & $ -$ & $(-)$\\
$8$ & $ -$ & $( -)$ & $   37$ & $(   4)$ & $  320$ & $(  32)$ & $ -$ & $( -)$\\
$9$ & $ -$ & $( -)$ & $   32$ & $(   2)$ & $ -$ & $( -)$ & $ -$ & $(-)$\\ \hline
\multicolumn{1}{|c||}{\rule{0pt}{0.4cm}} & \multicolumn{8}{c|}{Variable-ADMM ($\underline{\tau}=1$, $\underline{\g}=0.5$, $\overline{\g}=0.999$, $\d=0.5$)} \\ \hline
\multicolumn{1}{|c||}{\rule{0pt}{0.4cm}$\ell$} & $N$ & $(N_\tau,N_\g)$ & $N$ & $(N_\tau,N_\g)$ & $N$ & $(N_\tau,N_\g)$ & $N$ & $(N_\tau,N_\g)$ \\ \hline
\multicolumn{1}{|c||}{\rule{0pt}{0.4cm}$3$}  & $  880$ & $( 0, 8)$ & $  236$ & $( 0, 5)$ & $   83$ & $( 0, 3)$ & $   59$ & $( 1, 2)$\\
$4$ & $ -$ & $( -, -)$ & $  406$ & $( 0, 6)$ & $   60$ & $( 0, 2)$ & $  116$ & $( 3, 3)$\\
$5$ & $ -$ & $( -, -)$ & $  547$ & $( 0, 7)$ & $  123$ & $( 1, 3)$ & $  213$ & $( 5, 4)$\\
$6$ & $ -$ & $( -, -)$ & $  442$ & $( 0, 7)$ & $  134$ & $( 3, 3)$ & $  233$ & $( 8, 4)$\\
$7$ & $ -$ & $( -, -)$ & $  410$ & $( 0, 7)$ & $  151$ & $( 8, 3)$ & $  247$ & $(11, 4)$\\
$8$ & $ -$ & $( -,-)$ & $  237$ & $( 0, 5)$ & $  254$ & $( 6, 4)$ & $  263$ & $(14, 4)$\\
$9$ & $ -$ & $( -, -)$ & $  185$ & $( 0, 4)$ & $  264$ & $( 8, 4)$ & $  277$ & $(17, 4)$\\ \lasthline
\end{tabular}
\caption{Iteration numbers for Example ~\ref{ex:obstacle} using ADMM, Fast-ADMM and Variable-ADMM with stopping criterion
$\|\nabla (u_h^{ref}-u_h^j)\| \leq 10^{-3}$. A hyphen ~($-$) means that the algorithm did not terminate within ~$10^3$ iterations. In parenthesis: total number of restarts (Fast-ADMM) and total number of adjustments of ~$\tau_j$ and ~$\g_j$ (Variable-ADMM).}\label{tab:iteration numbers for obstacle error 1e-3}
\end{table}

\begin{table}[!htb]
\centering
\begin{tabular}{|C{0.8cm}||C{0.6cm}|C{1.27cm}||C{0.6cm}|C{1.27cm}||C{0.6cm}|C{1.27cm}||C{0.6cm}|C{1.27cm}|}\firsthline
\multicolumn{1}{|c||}{} & \multicolumn{8}{c|}{ADMM (obstacle; $\veps_{stop}^{(2)}=h^2$, $\tau_j\equiv \overline{\tau}$)} \\[1pt] \hline
\multicolumn{1}{|c||}{\rule{0pt}{0.4cm}} & \multicolumn{2}{c||}{$\overline{\tau}=1$} & \multicolumn{2}{c||}{$\overline{\tau}=h^{-1}$} & \multicolumn{2}{c||}{$\overline{\tau}=h^{-2}$} & \multicolumn{2}{c|}{$\overline{\tau}=h^{-3}$}\\[1pt] \hline
\multicolumn{1}{|c||}{\rule{0pt}{0.4cm}$\ell$} & $N$ & $E_h/h$ & $N$ & $E_h/h$ & $N$ & $E_h/h$ & $N$ & $E_h/h$ \\[1pt] \hline
\multicolumn{1}{|c||}{\rule{0pt}{0.4cm}$3$} & $    4$ & $0.8658$ & $   14$ & $0.1659$ & $    9$ & $0.0523$ & $   26$ & $0.0160$\\
$4$ & $   62$ & $0.5479$ & $   35$ & $0.1401$ & $   25$ & $0.0160$ & $  218$ & $0.0106$\\
$5$ & $  221$ & $0.3439$ & $   57$ & $0.0819$ & $   87$ & $0.0050$ & $ -$ & $-$\\
$6$ & $  697$ & $0.2868$ & $  197$ & $0.0357$ & $  382$ & $0.0026$ & $ -$ & $-$\\
$7$ & $ -$ & $-$ & $  538$ & $0.0221$ & $ -$ & $-$ & $-$ & $-$\\
$8$ & $ -$ & $-$ & $ -$ & $-$ & $ -$ & $-$ & $ -$ & $-$\\
$9$ & $ -$ & $-$ & $ -$ & $-$ & $-$ & $-$ & $ -$ & $-$\\ \hline
\multicolumn{1}{|c||}{\rule{0pt}{0.4cm}$\ell$} & \multicolumn{8}{c|}{Fast-ADMM ($\g=0.999$)} \\[1pt] \hline
\multicolumn{1}{|c||}{\rule{0pt}{0.4cm}$3$} & $    4$ & $0.8455$ & $   10$ & $0.1723$ & $    9$ & $0.0490$ & $   15$ & $0.0081$\\
$4$ & $   17$ & $0.5550$ & $   18$ & $0.1395$ & $   19$ & $0.0161$ & $   51$ & $0.0089$\\
$5$ & $  104$ & $0.3442$ & $   39$ & $0.0799$ & $   36$ & $0.0044$ & $  184$ & $0.0043$\\
$6$ & $  543$ & $0.2870$ & $   56$ & $0.0368$ & $   71$ & $0.0014$ & $-$ & $-$\\
$7$ & $ -$ & $-$ & $  185$ & $0.0220$ & $  168$ & $0.0010$ & $ -$ & $-$\\
$8$ & $ -$ & $-$ & $  462$ & $0.0124$ & $  466$ & $0.0004$ & $-$ & $-$\\
$9$ & $-$ & $-$ & $-$ & $-$ & $ -$ & $-$ & $-$ & $-$\\ \hline
\multicolumn{1}{|c||}{\rule{0pt}{0.4cm}$\ell$} & \multicolumn{8}{c|}{Variable-ADMM ($\underline{\tau}=1$, $\underline{\g}=0.5$, $\overline{\g}=0.999$, $\d=0.5$)} \\[1pt] \hline
\multicolumn{1}{|c||}{\rule{0pt}{0.4cm}$3$} & $    4$ & $0.8658$ & $    7$ & $0.3332$ & $    8$ & $0.1586$ & $   10$ & $0.1474$\\
$4$ & $   62$ & $0.5479$ & $   36$ & $0.3808$ & $   29$ & $0.1734$ & $   37$ & $0.1228$\\
$5$ & $  221$ & $0.3439$ & $   68$ & $0.2260$ & $   36$ & $0.2848$ & $   47$ & $0.1877$\\
$6$ & $  697$ & $0.2868$ & $   85$ & $0.2750$ & $   78$ & $0.1189$ & $   96$ & $0.1727$\\
$7$ & $-$ & $-$ & $  145$ & $0.2175$ & $  154$ & $0.0898$ & $  168$ & $0.1433$\\
$8$ & $ -$ & $-$ & $  251$ & $0.1708$ & $  166$ & $0.1914$ & $  290$ & $0.1090$\\
$9$ & $ -$ & $-$ & $  327$ & $0.1557$ & $  304$ & $0.1359$ & $  500$ & $0.0775$\\ \lasthline
\end{tabular}
\caption{Iteration numbers and ratio $E_h/h$ for Example ~\ref{ex:obstacle} using ADMM, Fast-ADMM and Variable-ADMM with $R_j \leq h^2/\til{C}_0$. A hyphen ~($-$) means that the algorithm did not terminate within ~$10^3$ iterations.}\label{tab:iteration numbers for obstacle residual h2}
\end{table}

Note that ~$G$ is strongly convex with coercivity constant $\a_G=1/2$, i.e., we have $\h{\varrho}_G(v,w)=\|v-w\|_X^2=\|\nabla(v-w)\|^2$. We have that the unique solution of the infinite-dimensional obstacle problem satisfies $u \in H^2(\O)$, cf. ~\cite{bs:68,sbartels:15}.
Hence, Corollary ~\ref{cor:residual control} implies that the error tolerance should be chosen as $\veps_{stop}=h^2$ so that the convergence rate $\|\nabla(u-u_h)\|=\mathcal{O}(h)$ is not violated. Particularly, ~$G$ is differentiable and its gradient is Lipschitz continuous and ~$B'$ is invertible, i.e., the conditions for linear convergence of the ADMM are satisfied.

With the chosen inner-products on ~$X$ and ~$Y$ we obtain the constants
\[
\a_G=\frac{1}{2}, \quad L_G =1, \quad \a_B \approx \frac{h}{2}, \quad c_B = 2c_P
\]
with ~$c_P$ denoting the Poincar\'{e} constant associated to the domain ~$\O$, which can in turn be
bounded by $c_P \leq \sqrt{2}/\pi$. Using these constants in Remark ~\ref{rmk:optimal step size} leads to $\tau_{opt}\approx \pi/(\sqrt{2}h)$ and $\g_{opt}\approx (1+\pi h/\sqrt{32})^{-1}$.

With regard to Corollary ~\ref{cor:residual control} we have to provide a computable
upper bound ~$\widetilde{C}_0$ for
\[
C_0=\max\Bigl\{\frac{1}{\tau_j}\|\l_h\|_h+\|u_h\|_h, \frac{1}{\tau_j}\|\l_h^j\|_h+\|u_h^j\|_h \Bigr\}.
\]
We have $\|u_h\|_h \leq 2 \|u_h\| \leq c(\|\chi\|+\|f\|)$. Furthermore, the optimality condition
$0 \in G'(u_h) + \partial F(u_h)$ implies the existence of a Lagrange multiplier ~$\l_h \in \partial F(u_h)$ with
\[
-(\l_h,v_h)_h = (\nabla u_h,\nabla v_h) - (f,v_h) \quad \text{for all } v_h \in \cS_0^1(\cT_\ell).
\]
Particularly, Assumption ~\ref{ass:existence of sp} is satisfied for Example ~\ref{ex:obstacle}.
Inserting $(\nabla u,\nabla v_h)$ on the right-hand side, using standard interpolation estimates,
an inverse estimate, the fact that $u \in H^2(\O)$ and integration by parts gives
\[\begin{split}
|(\l_h,v_h)_h| &\leq \|\nabla (u-u_h)\|\|\nabla v_h\| + \|f\|\|v_h\| + \|\Delta u\|\|v_h\| \\
&\leq c (1+\|f\|+\|\Delta u\|) \|v_h\| 
\end{split}\]
which means that $\|\l_h\|_h$ is uniformly bounded in ~$h$.
Therefore, since we only consider $\tau_j \geq \underline{\tau}=1$ in our numerical experiments we
set
\[
\widetilde{C}_0=\max\Bigl\{1,\frac{1}{\tau_j}\|\l_h^j\|_h+\|u_h^j\|_h\Bigr\},
\]
which is, up to constants, an upper bound for ~$C_0$.

\subsubsection{Results}

We report the iteration numbers for the ADMM, the Fast-ADMM and the Variable-ADMM
applied to Example ~\ref{ex:obstacle} with stopping criterion $\|\nabla (u_h^{ref} - u^j)\| \leq 10^{-3}$
in Table ~\ref{tab:iteration numbers for obstacle error 1e-3}. Note that
$\veps_{stop}^{(1)}=10^{-3}$ is a lower bound for the minimal mesh size we are considering in our experiments,
i.e., the outputs of the algorithms do not affect the order of convergence $\|\nabla(u-u_h)\|=\mathcal{O}(h)$.
We infer that, for large initial step sizes, the iteration numbers of the Variable-ADMM are considerably
smaller than those of the ADMM and also smaller or at least comparable to those of the Fast-ADMM. Particularly,
one can observe a mesh-independent convergence behavior for Variable-ADMM. Note that $\overline{\tau}=h^{-1}$
happens to be approximately the optimal step size ~$\tau_{opt}$ which explains the lower iteration numbers
for ADMM and Fast-ADMM in the case $\overline{\tau}=h^{-1}$ since Variable-ADMM had to restart several times
to recognize the actual contraction order.

In Table ~\ref{tab:iteration numbers for obstacle residual h2} the iteration numbers and the
ratio $E_h/h$, which identifies the quality of the stopping criterion, for the three algorithms with stopping criterion $R_j \leq h^2/C_0$
are displayed which also reflect a considerable improvement of the Variable-ADMM over the ADMM and
Fast-ADMM especially for large initial step sizes $\overline{\tau}=h^{-2},h^{-3}$. The ADMM and Variable-ADMM
do not differ for $\overline{\tau}=1$ since we have set $\underline{\tau}=1$. A remarkable feature of the
Variable-ADMM is that it performs robustly with respect to the choice of the initial step size ~$\overline{\tau}$ in contrast to
the ADMM and Fast-ADMM. Let us finally remark that the ratio $E_h/h$ remains bounded as $h \to 0$
which underlines that $R_j \leq \veps_{stop}/C_0$ is a reliable stopping criterion and optimal for Variable-ADMM.

\subsection{Application to $TV$-$L^2$ minimization}

In this subsection we apply the algorithms to a prototypical total variation minimization
problem, the so called \emph{ROF problem}, cf. ~\cite{rof:92}. We set
\[
G(u_h) = \frac{\a}{2} \|u_h-g\|^2, \quad F(Bu_h) = \int_\O |\nabla u_h| \dv{x}
\]
with $X_\ell = (\cS^1(\cT_\ell),(\cdot,\cdot))$, $Y_\ell = (\cL^0(\cT_\ell)^d,(\cdot,\cdot)_w)$, and $B=\nabla: X_\ell \to Y_\ell$.

\subsubsection{Example and stopping criterion}

We consider the following specification of the minimization problem.

\begin{example}\label{ex:rof}
We let $\a=20$ and $g=\til{g}+\xi \in \cS^1(\cT_\ell)$ where $\til{g} \in \cS^1(\cT_\ell)$ is the piecewise linear approximation of the characteristic function ~$\chi_{B_{1/5}(x_\O)}$ of the circle with radius $r=1/5$ around the center ~$x_\O$ of ~$\O$ and $\xi \in \cS^1(\cT_3)$ is a perturbation function whose coefficients are samples of a uniformly distributed random variable in the interval $[-1/10,1/10]$.
\end{example}

\begin{table}[!htb]
\centering
%\rule{0.0\textheight}{0.78\textwidth}
\begin{tabular}{|C{0.8cm}||C{0.77cm}|C{1.5cm}||C{0.77cm}|C{1.5cm}||C{0.77cm}|C{1.5cm}||C{0.77cm}|C{1.5cm}|}\firsthline
\multicolumn{1}{|c||}{} & \multicolumn{8}{c|}{ADMM (ROF; $\veps_{stop}^{(1)}=10^{-2}$, $\tau_j\equiv \overline{\tau}$)} \\ \hline
\multicolumn{1}{|c||}{\rule{0pt}{0.4cm}} & \multicolumn{2}{c||}{$\overline{\tau}=1$} & \multicolumn{2}{c||}{$\overline{\tau}=h^{-1}$} & \multicolumn{2}{c||}{$\overline{\tau}=h^{-2}$} & \multicolumn{2}{c|}{$\overline{\tau}=h^{-3}$}\\ \hline
\multicolumn{1}{|c||}{\rule{0pt}{0.4cm}$\ell$} & \multicolumn{2}{c||}{$N$} & \multicolumn{2}{c||}{$N$} & \multicolumn{2}{c||}{$N$} & \multicolumn{2}{c|}{$N$}\\ \hline
\multicolumn{1}{|c||}{\rule{0pt}{0.4cm}$3$} & \multicolumn{2}{c||}{$195$} & \multicolumn{2}{c||}{$36$} & \multicolumn{2}{c||}{$21$} & \multicolumn{2}{c|}{$127$}\\
\multicolumn{1}{|c||}{4} & \multicolumn{2}{c||}{$647$} & \multicolumn{2}{c||}{$58$} & \multicolumn{2}{c||}{$56$} & \multicolumn{2}{c|}{$623$}\\
\multicolumn{1}{|c||}{5} & \multicolumn{2}{c||}{$2443$} & \multicolumn{2}{c||}{$109$} & \multicolumn{2}{c||}{$178$} & \multicolumn{2}{c|}{$3994$}\\
\multicolumn{1}{|c||}{6} & \multicolumn{2}{c||}{$9827$} & \multicolumn{2}{c||}{$218$} & \multicolumn{2}{c||}{$578$} & \multicolumn{2}{c|}{$-$}\\
\multicolumn{1}{|c||}{7} & \multicolumn{2}{c||}{$-$} & \multicolumn{2}{c||}{$427$} & \multicolumn{2}{c||}{$810$} & \multicolumn{2}{c|}{$-$}\\
\multicolumn{1}{|c||}{8} & \multicolumn{2}{c||}{$-$} & \multicolumn{2}{c||}{$842$} & \multicolumn{2}{c||}{$1430$} & \multicolumn{2}{c|}{$-$}\\
\multicolumn{1}{|c||}{9} & \multicolumn{2}{c||}{$-$} & \multicolumn{2}{c||}{$1670$} & \multicolumn{2}{c||}{$3666$} & \multicolumn{2}{c|}{$-$}\\ \hline
\multicolumn{1}{|c||}{\rule{0pt}{0.4cm}} & \multicolumn{8}{c|}{Fast-ADMM ($\g=0.999$)} \\ \hline
\multicolumn{1}{|c||}{\rule{0pt}{0.4cm}$\ell$} & $N$ & $(N_{re})$ & $N$ & $(N_{re})$ & $N$ & $(N_{re})$ & $N$ & $(N_{re})$ \\[1pt] \hline
\multicolumn{1}{|c||}{\rule{0pt}{0.4cm}$3$} & $   60$ & $(   2)$ & $   19$ & $(   1)$ & $    9$ & $(   0)$ & $   77$ & $(   8)$\\
$4$ & $  174$ & $(   9)$ & $   31$ & $(   2)$ & $   25$ & $(   2)$ & $  386$ & $(  42)$\\
$5$ & $ 1596$ & $( 560)$ & $   50$ & $(   4)$ & $   88$ & $(   7)$ & $ 6410$ & $(2907)$\\
$6$ & $-$ & $(-)$ & $  113$ & $(   9)$ & $  289$ & $(  24)$ & $-$ & $(-)$\\
$7$ & $-$ & $( -)$ & $  226$ & $(  19)$ & $  410$ & $(  33)$ & $-$ & $( -)$\\
$8$ & $-$ & $( -)$ & $  485$ & $(  46)$ & $  790$ & $(  72)$ & $-$ & $( -)$\\
$9$ & $-$ & $( -)$ & $ 1042$ & $( 111)$ & $ 5093$ & $(2158)$ & $-$ & $( -)$\\ \hline
\multicolumn{1}{|c||}{\rule{0pt}{0.4cm}} & \multicolumn{8}{c|}{Variable-ADMM ($\underline{\tau}=1$, $\underline{\g}=0.5$, $\overline{\g}=0.999$, $\d=0.5$)} \\ \hline
\multicolumn{1}{|c||}{\rule{0pt}{0.4cm}$\ell$}& $N$ & $(N_\tau,N_\g)$ & $N$ & $(N_\tau,N_\g)$ & $N$ & $(N_\tau,N_\g)$ & $N$ & $(N_\tau,N_\g)$ \\[1pt] \hline
\multicolumn{1}{|c||}{\rule{0pt}{0.4cm}$3$} & $  195$ & $( 0, 6)$ & $  111$ & $( 0, 4)$ & $   21$ & $( 2, 1)$ & $   25$ & $( 5, 1)$\\
$4$ & $  647$ & $( 0, 8)$ & $  183$ & $( 0, 5)$ & $   50$ & $( 3, 2)$ & $   62$ & $( 6, 2)$\\
$5$ & $ 2443$ & $( 0, 9)$ & $  319$ & $( 0, 6)$ & $   62$ & $( 5, 2)$ & $   82$ & $(10, 2)$\\
$6$ & $ 9827$ & $( 0, 9)$ & $  594$ & $( 0, 7)$ & $  129$ & $( 5, 3)$ & $  166$ & $(10, 3)$\\
$7$ & $-$ & $( -, -)$ & $ 1136$ & $( 0, 8)$ & $  140$ & $( 7, 3)$ & $  180$ & $(14, 3)$\\
$8$ & $-$ & $( -, -)$ & $ 2194$ & $( 0, 9)$ & $  156$ & $( 8, 3)$ & $  197$ & $(14, 3)$\\
$9$ & $-$ & $( -, -)$ & $-$ & $( -, -)$ & $  326$ & $( 7, 4)$ & $  401$ & $(16, 4)$\\ \lasthline
\end{tabular}
\caption{Iteration numbers for Example ~\ref{ex:rof} using ADMM, Fast-ADMM and Variable-ADMM with stopping criterion $\sqrt{\a}\|u_h^{ref}-u_h^j\|\leq 10^{-2}$. A hyphen ~($-$) means that the algorithm did not converge within ~$10^4$ iterations. In parenthesis: total number of restarts (Fast-ADMM) and total number of adjustments of ~$\tau_j$ and ~$\g_j$ (Variable-ADMM).}\label{tab:iteration numbers for rof error 1e-2}
\end{table}

\begin{table}[!htb]
\centering
\begin{tabular}{|C{0.8cm}||C{0.77cm}|C{1.21cm}||C{0.77cm}|C{1.21cm}||C{0.77cm}|C{1.21cm}||C{0.77cm}|C{1.21cm}|}\firsthline
\multicolumn{1}{|c||}{} & \multicolumn{8}{c|}{ADMM (ROF; $\veps_{stop}^{(2)}=h$, $\tau_j\equiv \overline{\tau}$)} \\ \hline
\multicolumn{1}{|c||}{\rule{0pt}{0.4cm}} & \multicolumn{2}{c||}{$\overline{\tau}=1$} & \multicolumn{2}{c||}{$\overline{\tau}=h^{-1}$} & \multicolumn{2}{c||}{$\overline{\tau}=h^{-2}$} & \multicolumn{2}{c|}{$\overline{\tau}=h^{-3}$}\\ \hline
\multicolumn{1}{|c||}{\rule{0pt}{0.4cm}$\ell$} & $N$ & $E_h/\sqrt{h}$ & $N$ & $E_h/\sqrt{h}$ & $N$ & $E_h/\sqrt{h}$ & $N$ & $E_h/\sqrt{h}$ \\ \hline
\multicolumn{1}{|c||}{\rule{0pt}{0.4cm}$3$} & $   27$ & 0.2242 & $    7$ & 0.1531 & $    8$ & 0.1479 & $   49$ & 0.1836\\
$4$ & $  137$ & 0.1690 & $   14$ & 0.1455 & $   47$ & 0.0558 & $  521$ & 0.0569\\
$5$ & $  691$ & 0.1657 & $   32$ & 0.1614 & $  217$ & 0.0248 & $ 4878$ & 0.0251\\
$6$ & $ 3882$ & $0.1739$ & $   87$ & $0.1729$ & $  740$ & $0.0529$ & $-$ & $-$\\
$7$ & $-$ & $-$ & $  286$ & $0.1569$ & $ 2037$ & $0.0169$ & $-$ & $-$\\
$8$ & $-$ & $-$ & $  920$ & $0.1169$ & $ 7598$ & $0.0131$ & $-$ & $-$\\
$9$ & $-$ & $-$ & $ 2585$ & $0.0834$ & $-$ & $-$ & $-$ & $-$\\ \hline
\multicolumn{1}{|c||}{\rule{0pt}{0.4cm}$\ell$} & \multicolumn{8}{c|}{Fast-ADMM ($\g=0.999$)} \\ \hline
\multicolumn{1}{|c||}{\rule{0pt}{0.4cm}$3$} & $   15$ & 0.1188 & $    6$ & 0.1013 & $    7$ & 0.0757 & $   26$ & 0.1625\\
$4$ & $   63$ & 0.1644 & $   13$ & 0.1080 & $   26$ & 0.0342 & $  326$ & 0.0554\\
$5$ & $  246$ & 0.1662 & $   28$ & 0.0975 & $  107$ & 0.0217 & $ 8178$ & 0.0251\\
$6$ & $ 2370$ & $0.1746$ & $   49$ & $0.1693$ & $  384$ & $0.0526$ & $-$ & $-$\\
$7$ & $-$ & $-$ & $  153$ & $0.1561$ & $ 1145$ & $0.0168$ & $-$ & $-$\\
$8$ & $-$ & $-$ & $  537$ & $0.1154$ & $-$ & $-$ & $-$ & $-$\\
$9$ & $-$ & $-$ & $ 1669$ & $0.0829$ & $-$ & $-$ & $-$ & $-$\\ \hline
\multicolumn{1}{|c||}{\rule{0pt}{0.4cm}$\ell$} & \multicolumn{8}{c|}{Variable-ADMM ($\underline{\tau}=1$, $\underline{\g}=0.5$, $\overline{\g}=0.999$, $\d=0.5$)} \\ \hline
\multicolumn{1}{|c||}{\rule{0pt}{0.4cm}$3$} & $   27$ & 0.2242 & $   14$ & 0.1531 & $    6$ & 0.1244 & $   20$ & 0.1310\\
$4$ & $  137$ & 0.1690 & $   34$ & 0.1455 & $   26$ & 0.0957 & $   33$ & 0.1006\\
$5$ & $  691$ & 0.1657 & $   78$ & 0.1614 & $   60$ & 0.0568 & $   81$ & 0.0529\\
$6$ & $ 3882$ & $0.1739$ & $  331$ & $0.1729$ & $  137$ & $0.0426$ & $  181$ & $0.0178$\\
$7$ & $-$ & $-$ & $  995$ & $0.1569$ & $  319$ & $0.0156$ & $  375$ & $0.0162$\\
$8$ & $-$ & $-$ & $ 2272$ & $0.1169$ & $  365$ & $0.0161$ & $  430$ & $0.0202$\\
$9$ & $-$ & $-$ & $-$ & $-$ & $  834$ & $0.0140$ & $ 1763$ & $0.0080$\\ \lasthline
\end{tabular}
\caption{Iteration numbers for Example ~\ref{ex:rof} using ADMM, Fast-ADMM and Variable-ADMM with $R_j \leq h/\til{C}_0$. A hyphen ~($-$) means that the algorithm did not converge within ~$10^4$ iterations. In parenthesis: total number of restarts (Fast-ADMM) and total number of adjustments of ~$\tau_j$ and ~$\g_j$ (Variable-ADMM).}\label{tab:iteration numbers for rof residual h}
\end{table}

Note that ~$G$ is strongly convex with coercivity constant $\a_G=\a/2$, i.e., we have $\h{\varrho}_G(v,w)=\a\|v-w\|^2$. Let $u \in BV(\O)\cap L^2(\O)$ denote the continuous minimizer. Since $g \in L^{\infty}(\O)$ one can show that $u \in L^{\infty}(\O)$. Then by Corollary ~\ref{cor:residual control} the error tolerance has to be chosen as $\veps_{stop}=h$ to match the optimal convergence rate $\|u-u_h\|=\mathcal{O}(h^{1/2})$, cf. ~\cite[Rmk. 7.2]{bns:14} and ~\cite[Rmk. 10.9 (ii)]{sbartels:15}.

The optimality condition $0 \in G'(u_h) + \partial F(\nabla u_h)$ implies the existence of a Lagrange multiplier
$\l_h \in \partial F(\nabla u_h)$ with $\diver \l_h=\a (u_h-g)$ (cf. ~\cite[Thm. 23.9]{rockafellar:70}) where the operator $\diver: \cL^0(\cT_\ell)^d \to \cS^1(\cT_\ell)$ is defined via $-(\diver \mu_h,v_h)=(\mu,\nabla v_h)_w$ for all $\mu_h \in \cL^0(\cT_\ell)^d$ and 
$v_h \in \cS^1(\cT_\ell)$. Hence, Assumption ~\ref{ass:existence of sp} is satisfied. In this setting the constant ~$C_0$ from Corollary ~\ref{cor:residual control} is given by
\[
C_0=\max\Bigl\{\frac{1}{\tau_j}\|\l_h\|_w+\|\nabla u_h\|_w, \frac{1}{\tau_j}\|\l_h^j\|_w+\|\nabla u_h^j\|_w \Bigr\}.
\]
The specific choice of the norm ensures that by an inverse estimate it holds $\|\nabla u_h\|_w \leq c \|\nabla u_h\|_{L^1(\O)}$. The optimality condition $\l_h \in \partial F(p_h)$ implies that
\[
\l_h=\begin{cases} \displaystyle h^{-d} \frac{p_h}{|p_h|}, &\text{ if } p_h \neq 0, \\
h^{-d} \xi, &\text{ if } p_h=0, \end{cases}
\]
with $\xi \in B_1(0)$. Therefore we have $\|\l_h\|_w \leq c h^{-d/2}$. This scaling of the Lagrange multiplier has to be taken into
account in the tolerance for the residual to obtain meaningful outputs, i.e., we set
\[
\widetilde{C}_0 = \max\Bigl\{\frac{1}{h^{d/2}\tau_j},\frac{1}{\tau_j}\|\l_h^j\|_w+\|\nabla u_h^j\|_w\Bigr\}.
\]

\subsubsection{Results}

The iteration numbers for the ADMM, the Fast-ADMM and the Variable-ADMM applied to Example ~\ref{ex:rof} with stopping criterion $\sqrt{\a}\|u_h^{ref}-u^j\| \leq 10^{-2}$ are displayed in Table ~\ref{tab:iteration numbers for rof error 1e-2}. Note that $\veps_{stop}^{(1)}=10^{-2}$ is a lower bound for ~$h^{1/2}$, i.e., the optimal convergence rate $\|u-u_h\|=\mathcal{O}(h^{1/2})$ is not affected by the computed outputs. The results again underline that Variable-ADMM can lead to a considerable improvement for large initial step sizes even though this example does not
fit into the framework of linear convergence of ADMM we addressed in Subsection ~\ref{subsec:linear convergence}.

In Table ~\ref{tab:iteration numbers for rof residual h} the iteration numbers with the stopping criterion $R_j \leq h/C_0$ are reported. Here, the advantage of using the Variable-ADMM is even more pronounced than in Example ~\ref{ex:obstacle} since again the iteration numbers especially for the initial step sizes $\overline{\tau}=h^{-2}, h^{-3}$ are lower compared to those of ADMM and Fast-ADMM.
The reported ratio $E_h/\sqrt{h}$ once more confirms the reliability of the chosen stopping criterion.

\subsection{Failure of other stopping criteria}

We next demonstrate that it is not sufficient to stop the ADMM and Fast-ADMM using
as the stopping criterion either $\|\l_h^j-\l_h^{j-1}\|_Y \leq \veps_{stop}$ or $\tau_j\|B(u_h^j-u_h^{j-1})\|_Y\leq \veps_{stop}$
because these stopping criteria may not always
lead to a suitable approximation of the exact minimizer ~$u_h$ and that one needs to resort to the stronger
stopping criterion $R_j \leq \veps_{stop}/C_0$ due to the saddle-point structure. To see this, we consider Example
~\ref{ex:obstacle} with the stopping criterion $\|\l_h^j-\l_h^{j-1}\|_h \leq h^2/C_0$
and Example ~\ref{ex:rof} with stopping criterion $\tau_j\|\nabla(u_h^j-u_h^{j-1})\|_w \leq h/C_0$ and investigate the
ratio $\|\nabla (u_h^{ref}-u_h^{stop})\|/h$ and $\sqrt{\a}\|u_h^{ref}-u_h^{stop}\|/\sqrt{h}$, respectively.
In Table ~\ref{tab:iteration numbers for weak stopping criterion}
the corresponding results are shown and we infer that the ratios do not remain bounded as $h \to 0$
indicating suboptimal approximations. Comparing the results with those reported in
Tables ~\ref{tab:iteration numbers for obstacle residual h2} and ~\ref{tab:iteration numbers for rof residual h}
we conclude that in order to obtain an accurate approximation
one has to control both the primal iterates ~$Bu_h^j$ and the dual iterates ~$\l_h^j$.

\begin{table}[!htb]
\centering
%\rule{0.0\textheight}{0.78\textwidth}
\begin{tabular}{|C{0.8cm}||C{0.8cm}|C{1.5cm}||C{0.8cm}|C{1.5cm}||C{0.8cm}|C{1.5cm}|}\firsthline
\multicolumn{1}{|c||}{} & \multicolumn{6}{c|}{Obstacle ($\veps_{stop}^{(2)}=h^2$, $\overline{\tau}=h^{-3}$)} \\[1pt] \hline
\multicolumn{1}{|c||}{\rule{0pt}{0.4cm}} & \multicolumn{2}{c||}{ADMM} & \multicolumn{2}{c||}{Fast-ADMM} & \multicolumn{2}{c|}{Variable-ADMM}\\[1pt] \hline
\multicolumn{1}{|c||}{\rule{0pt}{0.4cm}$\ell$} & $N$ & $E_h/h$ & $N$ & $E_h/h$ & $N$ & $E_h/h$ \\[1pt] \hline
\multicolumn{1}{|c||}{\rule{0pt}{0.4cm}$3$} &$   23$ & 0.04 & $   14$ & 0.0296& $   10$ & 0.147\\
$4$ &$  152$ & 0.164 & $   43$ & 0.0321& $   37$ & 0.123\\
$5$ &$ 1123$ & 0.453 & $  120$ & 0.142& $   47$ & 0.188\\
$6$ &$ 2785$ & 15.1 & $  164$ & 9.46& $   96$ & 0.173\\
$7$ &$ 8455$ & 50.9 & $-$ & 57.9 & $  168$ & 0.143\\ \hline
\multicolumn{1}{|c||}{\rule{0pt}{0.4cm}} & \multicolumn{6}{c|}{ROF ($\veps_{stop}^{(2)}=h$, $\overline{\tau}=1$)} \\[1pt] \hline
\multicolumn{1}{|c||}{\rule{0pt}{0.4cm}$\ell$}& $N$ & $E_h/\sqrt{h}$ & $N$ & $E_h/\sqrt{h}$ & $N$ & $E_h/\sqrt{h}$ \\[1pt] \hline
\multicolumn{1}{|c||}{\rule{0pt}{0.4cm}$3$} & $    6$ & 1.19 & $    5$ & 1.10 & $    6$ & 1.19\\
$4$ & $   11$ & 2.10 & $    7$ & 2.09 & $   11$ & 2.10\\
$5$ & $   24$ & 3.20 & $   11$ & 3.24 & $   24$ & 3.20\\
$6$ & $   54$ & 4.81 & $   19$ & 4.76 & $   54$ & 4.81\\
$7$ & $  123$ & 7.13 & $   31$ & 7.04 & $  123$ & 7.13\\ \lasthline
\end{tabular}
\caption{Iteration numbers and ratios $E_h/h$ and $E_h/\sqrt{h}$ with stopping criterion $\|\l_h^j-\l_h^{j-1}\| \leq h^2/\til{C}_0$ and $\tau_j\|\nabla(u_h^j-u_h^{j-1})\| \leq h/\til{C}_0$ and step size $\overline{\tau}=h^{-3}$ and $\overline{\tau}=1$ for Example ~\ref{ex:obstacle} and Example ~\ref{ex:rof}, respectively, using ADMM, Fast-ADMM and Variable-ADMM.}\label{tab:iteration numbers for weak stopping criterion}
\end{table}

\section{Conclusion}\label{sec:conclusion}

From our numerical experiments we infer the following observations:
\begin{itemize}
\item The Variable-ADMM can considerably improve the performance for any initial step size. If not improving the Variable-ADMM at least yields results comparable to those of ADMM and Fast-ADMM which differ by a fixed factor.
\item For large initial step sizes, i.e., $\overline{\tau}=h^{-2}$, the Variable-ADMM always yields lower iteration numbers than the other two schemes. This suggests to choose a large initial step size.
\item The reinitialization of the Variable-ADMM did not considerably influence the total number of iterations.
\item In order to obtain meaningful approximations one has to control both contributions to the residual, i.e., both
~$Bu^j$ and ~$\l^j$ have to be controlled which is accomplished by $R_j \leq \veps_{stop}/C_0$.
\end{itemize}

\appendix

\section{Co-coercivity of ~$\nabla G$}\label{apdx:co-coercivity}

For completeness we include a short proof of the co-coercivity estimate needed in Theorem ~\ref{thm:lin conv},
see, e.g., ~\cite{hl:04}.

\begin{lemma}\label{lemma:co-coercivity}
Assume that ~$G$ is convex and differentiable such that $\nabla G$ is Lipschitz continuous
with Lipschitz constant ~$L_G$. For all $v,v' \in X$ it holds
\[
L_G (v-v',\nabla G(v)-\nabla G(v'))_X \geq \|\nabla G(v)-\nabla G(v')\|_X^2.
\]
\end{lemma}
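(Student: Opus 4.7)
The plan is to invoke the classical Baillon--Haddad argument using an auxiliary convex function whose minimizer is known explicitly, together with the descent lemma that follows from Lipschitz continuity of $\nabla G$.

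First I would introduce, for fixed $v' \in X$, the auxiliary functional $h(x) = G(x) - (\nabla G(v'),x)_X$. Because $G$ is convex and $\nabla G$ is $L_G$-Lipschitz, $h$ inherits convexity and its gradient $\nabla h(x) = \nabla G(x) - \nabla G(v')$ is also $L_G$-Lipschitz. Moreover $\nabla h(v') = 0$, so by convexity $v'$ is a global minimizer of $h$. The symmetric construction $h'(x) = G(x) - (\nabla G(v),x)_X$ has $v$ as its global minimizer.

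The key step is the descent inequality: from Lipschitz continuity of $\nabla h$ one obtains, for all $x, y \in X$,
\[
h(y) \le h(x) + (\nabla h(x), y-x)_X + \frac{L_G}{2}\|y-x\|_X^2.
\]
Minimizing the right-hand side in $y$ yields the optimal choice $y = x - L_G^{-1}\nabla h(x)$, giving
\[
h\bigl(x - L_G^{-1}\nabla h(x)\bigr) \le h(x) - \frac{1}{2L_G}\|\nabla h(x)\|_X^2.
\]
Combining this with the fact that $v'$ minimizes $h$ and evaluating at $x = v$ produces
\[
h(v') \le h(v) - \frac{1}{2L_G}\|\nabla G(v)-\nabla G(v')\|_X^2.
\]
The analogous inequality with the roles of $v$ and $v'$ swapped (using $h'$) reads
\[
h'(v) \le h'(v') - \frac{1}{2L_G}\|\nabla G(v)-\nabla G(v')\|_X^2.
\]

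Finally I would add these two inequalities. The $G(v)$ and $G(v')$ contributions cancel, and the remaining linear terms simplify via $(\nabla G(v') - \nabla G(v), v'-v)_X$ appearing on both sides so that the sum reduces to
\[
(\nabla G(v) - \nabla G(v'), v-v')_X \ge \frac{1}{L_G}\|\nabla G(v)-\nabla G(v')\|_X^2,
\]
which is the claimed co-coercivity estimate after multiplication by $L_G$. I do not expect a serious obstacle here, as the only nontrivial ingredient is the descent lemma, whose derivation from Lipschitz continuity of $\nabla G$ via the fundamental theorem of calculus applied to $t \mapsto G(x + t(y-x))$ is standard; the rest is bookkeeping through the two symmetric auxiliary functionals.
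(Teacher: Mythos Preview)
Your proposal is correct and follows essentially the same Baillon--Haddad argument as the paper: the auxiliary functional $h$ is the paper's $G_{v'}$, the descent inequality and its minimization over the second argument are identical, and the final symmetrization step matches the paper's ``exchanging roles of $v$ and $v'$ and adding''. The only cosmetic difference is that the paper justifies the descent inequality by noting that $w\mapsto \tfrac{L_G}{2}\|w\|_X^2 - G_v(w)$ is convex, whereas you invoke the fundamental theorem of calculus; both are standard routes to the same estimate.
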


\begin{proof}
Let $v,v' \in X$. We define
\[
G_v(w) = G(w) - (\nabla G(v),w)_X, \quad \text{and} \quad g_v(w) = \frac{L_G}{2}\|w\|_X^2 - G_v(w).
\]
The functional $G_v$ is convex since it is the sum of two convex functionals.
Furthermore, one can check that for all $w,w'$ we have
\[
(\nabla g_v(w)-\nabla g_v(w'),w-w')_X \geq 0,
\]
i.e., ~$g_v$ is convex. Thus, we have for all $w,w' \in X$
\begin{align*}
g_v(w') &\geq (\nabla g(w),w'-w)_X + g(w) \\
\Longleftrightarrow \quad G_v(w') &\leq G_v(w) + (\nabla G_v(w),w'-w)_X + \frac{L_G}{2} \|w-w'\|_X^2. 
\end{align*}
Note that ~$v$ is a minimizer of ~$G_v$ since $\nabla G_v(v) = \nabla G(v) - \nabla G(v)=0$.
Hence, we have for all $w,w' \in X$
\begin{equation}\label{G_v}
G_v(v) \leq G_v(w) + (\nabla G_v(w),w'-w)_X + \frac{L_G}{2} \|w-w'\|_X^2.
\end{equation}
By minimizing the right-hand side of ~\eqref{G_v} with respect to ~$w'$ for fixed ~$w$ we obtain
the critical point $w^*=w-\frac{1}{L_G}\nabla G_v(w)$. Choosing $w'=w^*$ in ~\eqref{G_v} we obtain
\[
\frac{1}{2 L_G} \|\nabla G_v(w)\|_X^2 \leq G_v(w) - G_v(v).
\]
This yields
\begin{align*}
G(v')-G(v)-&(\nabla G(v),v'-v)_X = G_v(v')-G_v(v) \\
&\geq \frac{1}{2 L_G} \|\nabla G_v(v')\|_X^2 = \frac{1}{2 L_G} \|\nabla G(v')-\nabla G(v)\|_X^2.
\end{align*}
Exchanging roles of ~$v$ and ~$v'$ and adding the inequalities yields the assertion.
\end{proof}

\section{Fast-ADMM}\label{apdx:fast-admm}

In ~\cite{gosb:14} an accelerated version of ADMM with fixed step sizes is proposed.
The work is inspired by an acceleration technique presented
in ~\cite{nesterov:83} which has also been used in the context of forward-backward splitting
in ~\cite{bt:09} and in ~\cite{gms:13} for the special case $B=I$.
The technique consists in a certain extrapolation of the variables.
The authors can prove a $\mathcal{O}(1/J^2)$ convergence rate for the objective value of the dual problem
if ~$F$ and ~$G$ are strongly convex, ~$G$ is quadratic and if the step size is chosen properly. The residual
also enjoys this convergence rate if, in addition, ~$B$ has full row rank. However, for problems with ~$F$ or ~$G$
being only convex they have to impose a restart condition to guarantee stability and convergence of the method. We
will refer to this method with included restart condition as the Fast-ADMM. The details of the algorithm are
given in the following.

\begin{algorithm}[Fast-ADMM]\label{alg:fast_admm}
Choose $(u^0,\l^0)\in X\times Y$ such that $G(u^0)<\infty$.
Choose $\overline{\tau}>0$, $\g \in (0,1)$, $\overline{R} \gg 0$ and $\veps_{stop}>0$. Set
$\hu^0=u^0$, $\widehat{\l}^0=\l^0$, $\theta_0=1$ and $j=1$. \\
(1) Set $R_0=\overline{R}$.\\
(2)-(6) As in Algorithm ~\ref{alg:alg_2}.\\
(7) If $R_j < \g R_{j-1}$ set
\begin{align*}
\theta_j&=\frac{1+\sqrt{1+4\theta_{j-1}^2}}{2},\\
\hu^j &= u^j + \frac{\theta_{j-1}-1}{\theta_j}(u^j-u^{j-1}),\\
\widehat{\l}^j &= \l^j \frac{\theta_{j-1}-1}{\theta_j}(\l^j-\l^{j-1}).
\end{align*}
Otherwise, set $\theta_j=1$, $\hu^j=u^{j-1}$, $\widehat{\l}^j=\l^{j-1}$
and $R_j=\g^{-1}R_{j-1}$.\\
(8) Set $j\to j+1$ and continue with~(2). 
\end{algorithm}

\medskip
\textit{Acknowledgements.} The authors acknowledge financial support for the project "Finite Element approximation of functions of bounded variation and application to model of damage, fracture and plasticity" (BA 2268/2-1) by the DFG via the priority program "Reliable Simulation Techniques in Solid Mechanics, Development of Non-standard Discretization Methods, Mechanical and Mathematical Analysis" (SPP 1748).

\end{document}